\newtheorem{theorem}{Theorem}[section]
\newtheorem{lemma}[theorem]{Lemma}
\newtheorem{definition}[theorem]{Definition}
\newtheorem{corollary}[theorem]{Corollary}
\newtheorem{remark}[theorem]{Remark}
\newtheorem{example}[theorem]{Example}
\newcommand{\F}{\mathbb F}
\newcommand{\N}{\mathbb N}
\newcommand{\rad}{\mathop{\rm rad}\nolimits}
\newcommand{\ord}{\mathop{\rm ord}\nolimits}
\newcommand{\doublespace}
\begin{document}

\begin{frontmatter}

\title{Factorization of composed polynomials and applications}

\author[UFMG]{F.E. Brochero Mart\'inez}
\ead{fbrocher@mat.ufmg.br}
\author[USP]{Lucas Reis\corref{cor1}}
\ead{lucasreismat@gmail.com}
\author[UFMG,UFRR]{Lays Silva}
\ead{laysgrazielle@gmail.com}
\fntext[UFRR]{Permanent address: Departamento de Matem\'{a}tica, Universidade Federal de Roraima, Boa Vista-RR, 69310-000, Brazil}
\address[UFMG]{Departamento de Matem\'{a}tica, Universidade Federal de Minas Gerais, Belo Horizonte, MG, 30123-970, Brazil.}
\address[USP]{Universidade de S\~{a}o Paulo, Instituto de Ci\^{e}ncias Matem\'{a}ticas e de Computa\c{c}\~{a}o, S\~{a}o
Carlos, SP 13560-970, Brazil.}
\cortext[cor1]{Corresponding author}
\begin{abstract}
Let $\mathbb{F}_q$ be the finite field with $q$ elements, where $q$ is a prime power and $n$ be a positive integer. In this paper, we explore the factorization of $f(x^{n})$ over $\mathbb{F}_q$, where $f(x)$ is an irreducible polynomial over $\F_q$. Our main results provide generalizations of recent works on the factorization of binomials $x^n-1$. As an application, we provide an explicit formula for the number of irreducible factors of $f(x^n)$ under some generic conditions on $f$ and $n$.
\end{abstract}

\begin{keyword}
irreducible polynomials, cyclotomic polynomials, finite fields, $\lambda$-constacyclic codes
\MSC[2010]{12E20 \sep 11T30}
\end{keyword}
\end{frontmatter}

\section{Introduction}

The factorization of polynomials over finite fields plays an important role in a wide variety of technological situations, including efficient and secure communications, cryptography \cite{cryp},  deterministic simulations of random process, digital tracking system  \cite{Gol} and error-correcting codes \cite{codes}.

For instance, certain codes can be constructed from the divisors of special polynomials; for $\lambda\in \F_q^*$ and $n$ a positive integer,   
each  divisor  of $x^n-\lambda$ in $\F_q[x]$ determines a $\lambda$-constacyclic code of length $n$ over $\F_q$ . In fact,  each $\lambda$-constacyclic code of  length $n$ can be represented as an ideal of the ring ${\mathcal R}_{n,\lambda}=\F_q[x]/\langle x^n-\lambda\rangle$, and each ideal of $\mathcal R_{n,\lambda}$ is generated by a unique factor of $x^n-\lambda$. 
In addition, if $g\in \F_q[x]$ is an irreducible factor of $x^n-\lambda$, the polynomial $\frac{x^n-\lambda}{g(x)}$  generates an  ideal  of $\frac{\F_q[x]}{(x^n-\lambda)}$ that can be see as a  minimal $\lambda$-constacyclic code of lenght $n$ and dimension $\deg(g(x))$.  
In particular, in order to determine every minimal $\lambda$-constacyclic code, the factorization of $x^n-\lambda$ over $\F_q$ is required. Some partial results about this problem can be found in~\cite{LLWZ, LiYu, BaRa}. 
The case $\lambda=1$ corresponds to the well known cyclic codes (see  \cite{LiNi}).  

We observe that the polynomial $x^n-\lambda$ is of the form $f(x^n)$,   where $f(x)=x-\lambda \in \F_q[x]$ is an irreducible polynomial.  
%
%
%
%
The following theorem provides a criterion on the irreducibility of composed polynomials $f(x^n)$ when $f$ is irreducible.

\begin{theorem}[\cite{LiNi} Theorem 3.35]\label{3.35}
Let $n$ be a positive integer and $f\in \mathbb F_{q}[x]$ be an irreducible polynomials of degree $k$ and order $e$. Then the polynomial $f(x^{n})$ is irreducible over $\mathbb F_{q}$ if and only if the following conditions are satisfied:
\begin{itemize}
\item[(1)] $\rad(n)$ divides $e$,
\item[(2)] $\gcd\left(n, \frac {q^{k}-1}{e}\right)=1$ and
\item[(3)] if $4$ divides $n$ then $q^{k}\equiv 1\pmod 4$.
\end{itemize}
In addition, in the case that the polynomial $f(x^{n})$ is irreducible, it has degree $kn$ and order $en$.
\end{theorem}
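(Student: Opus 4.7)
The strategy is to reduce the irreducibility of $f(x^n)$ over $\F_q$ to that of the binomial $x^n-\alpha$ over the splitting field $\F_{q^k}$ of $f$, and then invoke the classical irreducibility criterion for binomials (see, e.g., \cite{LiNi}). Let $\alpha\in\overline{\F_q}$ be a root of $f$, so that $\F_q(\alpha)=\F_{q^k}$ and $\ord(\alpha)=e$. From $f(x^n)=\prod_{i=0}^{k-1}(x^n-\alpha^{q^i})$, every root $\beta$ of $x^n-\alpha$ is also a root of $f(x^n)$. Since $\F_q(\beta)\supseteq\F_q(\beta^n)=\F_{q^k}$, the tower identity $[\F_q(\beta):\F_q]=k\cdot[\F_{q^k}(\beta):\F_{q^k}]$ shows that $f(x^n)$, which has degree $kn$, is irreducible over $\F_q$ if and only if $[\F_{q^k}(\beta):\F_{q^k}]=n$, i.e.\ if and only if $x^n-\alpha$ is irreducible over $\F_{q^k}$.

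I would then invoke the classical binomial criterion: $x^n-\alpha$ is irreducible over $\F_{q^k}$ iff \emph{(a)} $\alpha\notin(\F_{q^k}^*)^p$ for every prime $p\mid n$, and \emph{(b)} whenever $4\mid n$, $\alpha\notin-4(\F_{q^k}^*)^4$. Using cyclicity of $\F_{q^k}^*$ and $\ord(\alpha)=e$, a short computation shows that $\alpha\notin(\F_{q^k}^*)^p$ is equivalent to ``$p\mid e$ and $p\nmid(q^k-1)/e$''. Asking this for every prime $p\mid n$ is exactly conditions (1) and (2) combined. The main technical point is to prove that, under (1) and (2), condition (b) is equivalent to (3). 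Assuming $4\mid n$: (1) forces $2\mid e$, so $q^k$ is odd, and a case analysis on $q^k\bmod 4$ is needed. If $q^k\equiv 1\pmod 4$, pick $i\in\F_{q^k}$ with $i^2=-1$; the identity $(1+i)^4=-4$ shows $-4\in(\F_{q^k}^*)^4$, reducing (b) to $\alpha\notin(\F_{q^k}^*)^4$, which follows from (2) because $4\mid n$ forces $2\nmid(q^k-1)/e$. If $q^k\equiv 3\pmod 4$, then $(\F_{q^k}^*)^4=(\F_{q^k}^*)^2$ and both $-4$ and $\alpha$ turn out to be non-squares (for $\alpha$, condition (2) yields $v_2(e)=v_2(q^k-1)=1$, whence $\alpha^{(q^k-1)/2}=-1$), so $\alpha\in-4(\F_{q^k}^*)^4$ and (b) fails, forcing (3).

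Finally, the degree claim $\deg f(x^n)=kn$ is immediate. For the order, a lifting-the-exponent computation (handling odd primes and $p=2$ separately, and invoking (3) when $4\mid n$) yields $en\mid q^{kn}-1$ under (1)--(3). Picking $\gamma\in\F_{q^{kn}}^*$ of order $en$, one has $\ord(\gamma^n)=e$, so $\alpha=(\gamma^n)^j$ for some $j$ with $\gcd(j,e)=1$; since $\rad(n)\mid e$ by (1), this also gives $\gcd(j,n)=1$, hence $\beta:=\gamma^j$ satisfies $\beta^n=\alpha$ and $\ord(\beta)=en$. As $f(x^n)$ is the minimal polynomial of $\beta$, its order is $en$. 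The main obstacle in the proof is the delicate case analysis for condition (b)/(3); the remaining steps are essentially formal translations and a standard application of lifting the exponent.
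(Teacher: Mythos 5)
Your argument is correct, but note that the paper itself offers no proof of this statement: it is quoted from Lidl--Niederreiter, so the only meaningful comparison is with the standard proof there. That proof works directly with a root $\beta$ of $f(x^{n})$: one shows that under (1)--(3) the multiplicative order of $\beta$ equals $en$ and that $\ord_{en}q=kn$, so the minimal polynomial of $\beta$ already has degree $kn=\deg f(x^{n})$. Your route is genuinely different: you reduce irreducibility of $f(x^{n})$ over $\F_q$ to irreducibility of the binomial $x^{n}-\alpha$ over $\F_{q^{k}}$ via the tower identity $[\F_q(\beta):\F_q]=k\,[\F_{q^{k}}(\beta):\F_{q^{k}}]$, and then invoke the Vahlen--Capelli criterion; translating its two conditions through the cyclic structure of $\F_{q^{k}}^{*}$ does give exactly (1)--(3), and the computations you sketch are the right ones (the equivalence of $\alpha\notin(\F_{q^{k}}^{*})^{p}$ with ``$p\mid e$ and $p\nmid (q^{k}-1)/e$''; the identity $(1+i)^{4}=-4$ when $q^{k}\equiv 1\pmod 4$; Euler's criterion showing that both $\alpha$ and $-4$ are nonsquares when $q^{k}\equiv 3\pmod 4$, so that $\alpha\in-4(\F_{q^{k}}^{*})^{4}$ and irreducibility fails). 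What your approach buys is a clean ``only if'' direction, which comes for free from Vahlen--Capelli, at the cost of importing that criterion as a black box; the concluding order argument (find $\gamma$ of order $en$, adjust by an exponent $j$ coprime to $en$ so that $\gamma^{jn}=\alpha$) is also correct. One small overstatement: condition (3) is not actually needed to obtain $en\mid q^{kn}-1$, since for even $n$ the lifting-the-exponent estimate gives $\nu_{2}(q^{kn}-1)=\nu_{2}(q^{k}-1)+\nu_{2}(q^{k}+1)+\nu_{2}(n)-1\ge\nu_{2}(q^{k}-1)+\nu_{2}(n)\ge\nu_{2}(en)$ for any odd $q$; condition (3) enters only through the irreducibility criterion itself.
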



In the present paper, we  consider $f\in\F_q[x]$  an irreducible polynomial of degree $k$ and order $e$, and impose some condition on $n$ (similar to the ones in \cite{BGO}) that make the polynomial $f(x^n)$ reducible. We then discuss the factorization of $f(x^n)$ over $\F_q$.  In~\cite{BGO}, the authors considered the factorization of $x^n-1\in \mathbb F_q[x]$, where $n$ is a positive integer such that every prime factor of $n$ divides $q-1$. A more general situation is considered by Wu, Yue and Fan~\cite{WYF}.
In the first part of this paper, we generalize these results for polynomial of the form $f(x^n)$, where $f\in \mathbb F_q[x]$ is irreducible. We further consider a more general situation when there exists some prime divisor of $n$ that does not divide $q-1$.

\section{Explicit Factors of $f(x^{n})$}

Throughout this paper, $\mathbb F_q$ denotes the finite field with $q$ elements, where $q$ is a prime power. Also, $\overline{\mathbb F}_q$ denotes the algebraic closure of  $\F_q$. 
For any positive integer $k$, $\rad(k)$ denotes the product of the distinct prime divisors of $k$ and, for each prime divisor $p$ of $k$, $ \nu_{p}(k)$ is the $p$-adic valuation of $k$. In other words, if $k = p_{1}^{\alpha_{1}}p_{2}^{\alpha_{2}}\cdots p_{s}^{\alpha_{s}}$ then $\rad(k) = p_{1}\cdots p_{s}$ and $\nu_{p_{i}}(k) = \alpha_{i}$. 
If $f\in \F_q[x]$ is a polynomial not divisible by $x$, the {\em order} of $f(x)$ is the least positive integer $e$ such that $f(x)$ divides $x^e-1$.
It is known that if $f(x)$ is an irreducible polynomial over $\mathbb F_q[x]$ of degree $m$, then each root $\alpha$ of $f$ is an element of $\mathbb F_{q^{m}}$ and, if $f(x)\ne x$, the order $e$ of $f$ is the multiplicative order of $\alpha$ and $m=\ord_eq$. In
addition, the roots of $f(x)$ are simple and they are given by $\alpha, \alpha^{q}, \cdots,\alpha^{q^{m-1}}$.
Let  $\sigma_{q}$  be denote  the Frobenius automorphism of $\overline{\F}_q$ given by $\sigma_q(\alpha)=\alpha^q$ for any $\alpha\in \overline{\F}_q$.
This automorphism naturally extends to the ring  automorphism
$$\begin{array}{cccc}
 & \! \overline{\mathbb{F}}_{q}[x] & \! \longrightarrow & \! \overline{\mathbb{F}}_{q}[x] \\
& a_mx^m+\cdots+a_1x+a_0 & \! \longmapsto& a_m^qx^m+\cdots+a_1^qx+a_0^q,
\end{array}
$$
that we also denote by $\sigma_q$.

The following well-known result provides some properties on the $p$-adic valuation of numbers of the form $a^j-1$ with $a\equiv 1\pmod p$. Its proof is direct by induction so we omit the details.

\begin{lemma}[Lifting the Exponent  Lemma]\label{LEL} Let $p$ be a prime and $\nu_p$ be the $p$-valuation. The following hold:
\begin{enumerate}[(1)]
\item if  $p$ is an odd prime such that $p$ divides $a-1$ then $\nu_{p}(a^{k}-1)=\nu_{p}(a-1)+\nu_{p}(k)$;
\item if $p=2$  and $a$ is odd number, then
$$
\nu_{2}(a^{k}-1)=
\begin{cases}
\nu_{2}(a-1)&\text{if $k$ is odd,} \\
\nu_{2}(a^{2}-1)+\nu_{2}(k)- 1&\text{if $k$ is even.}\\
\end{cases}
$$
\end{enumerate}
\end{lemma}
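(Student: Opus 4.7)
The plan is to handle the two parts separately. In each case the idea is to reduce the statement to a few direct computations of $p$-adic valuations and then bootstrap by inducting on the exponent of $p$ in $k$.

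For part (1), I would first establish two base facts. If $\gcd(k,p)=1$, writing $a^k-1=(a-1)(1+a+\cdots+a^{k-1})$ and using $a\equiv 1\pmod p$, the second factor is congruent to $k$ modulo $p$, hence coprime to $p$. This yields $\nu_p(a^k-1)=\nu_p(a-1)$. Next, for $k=p$, write $a=1+pt$ with $t\in\Z$ and apply the hockey-stick identity to rewrite the geometric sum as $\sum_{j=0}^{p-1}(pt)^j\binom{p}{j+1}$. The $j=0$ term equals $p$; the $j=1$ term has $p$-adic valuation at least $2+\nu_p(t)\ge 2$ because $\binom{p}{2}=p(p-1)/2$ is divisible by $p$ when $p$ is odd; and every $j\ge 2$ term has valuation at least $j\ge 2$. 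Hence the whole sum has valuation exactly $1$, giving $\nu_p(a^p-1)=\nu_p(a-1)+1$. Finally, writing $k=p^{\alpha}m$ with $\gcd(m,p)=1$ and iterating, I get
\[
\nu_p(a^k-1)=\nu_p((a^{p^{\alpha}})^m-1)=\nu_p(a^{p^{\alpha}}-1)=\nu_p(a-1)+\alpha.
\]

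For part (2), the odd-$k$ subcase is immediate: the factor $1+a+\cdots+a^{k-1}$ is a sum of an odd number of odd terms, hence odd. For even $k$, write $k=2^{\beta}m$ with $m$ odd and $\beta\ge 1$, and use the factorization
\[
a^{2^{\beta}}-1=(a^2-1)(a^2+1)(a^4+1)\cdots(a^{2^{\beta-1}}+1).
\]
Since $a$ is odd, $a^2\equiv 1\pmod 8$, so $a^{2^i}\equiv 1\pmod 8$ for every $i\ge 1$, and therefore $\nu_2(a^{2^i}+1)=1$. Adding the exponents gives $\nu_2(a^{2^{\beta}}-1)=\nu_2(a^2-1)+\beta-1$. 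Applying the odd-$k$ case to $(a^{2^{\beta}})^m$ then yields $\nu_2(a^k-1)=\nu_2(a^{2^{\beta}}-1)$, which is the claimed formula.

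The main technical hurdle is the evaluation of $\nu_p(a^p-1)$ in the odd prime case, where one must argue carefully that $\binom{p}{2}$ contributes an extra factor of $p$ in the $j=1$ summand of the rewritten geometric sum. This is precisely the step that breaks down at $p=2$, and it explains why the even-exponent branch of part (2) is anchored at $a^2-1$ rather than at $a-1$.
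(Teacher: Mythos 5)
Your proof is correct: the reduction to the two base computations ($\gcd(k,p)=1$ via the geometric sum, and $k=p$ via the hockey-stick rewriting with the key observation that $\binom{p}{2}$ supplies the extra factor of $p$ only for odd $p$), followed by iteration over $k=p^{\alpha}m$, is sound, and the even-exponent branch of part (2) via the telescoping factorization $(a^2-1)\prod_{i\ge 1}(a^{2^i}+1)$ with $\nu_2(a^{2^i}+1)=1$ is also correct. The paper omits the proof entirely, remarking only that it is ``direct by induction,'' and your argument is a complete instantiation of exactly that standard inductive approach, so there is nothing to contrast.
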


In this section, we give an explicit factorization of polynomial $f(x^n)$ over $\mathbb F_q[x]$, where $f(x)$ is an irreducible polynomial in $\mathbb F_q[x]$ and $n$ satisfies some special conditions. This result generalizes the ones found in~\cite{BGO}, where the authors obtain the explicit factorization of $x^{n}-1\in \mathbb F_q[x]$ under the condition that $\rad(n)$ divides $q-1$.
We have the  following preliminary result.

\begin{lemma}\label{lepri1}
Let $n$ be a positive integer such that $\rad(n)$ divides $q-1$ and let $f\in \mathbb{F}_{q}[x]$ be an irreducible polynomial of degree $k$ and order $e$, with $\gcd(ek,n)=1$. Additionally, suppose that $q\equiv 1\pmod 4$ if $8$ divides $n$. Let  $r$ be a positive integer such that $nr\equiv 1\pmod e$.
If $\alpha\in \overline{\F}_q$ is any root of $f(x)$, $\theta\in \mathbb F_q^{*}$ is an element of order $d=\gcd(n,q-1)$ and $t$ is a divisor of  $\frac nd$, the following hold:
\begin{enumerate}[(a)]
\item  the polynomial $g_t(x):=\displaystyle\prod_{i=0}^{k-1} (x-\alpha^{trq^{i}})$ is irreducible over $\mathbb F_q$, has degree $k$ and order $e$;
\item  for each nonnegative  integer $u\ge 0$ such that $\gcd(u, t)=1$, the polynomial  $g_t(\theta^{u}x^{t})$  is irreducible in $\mathbb F_q[x]$ and divides $f(x^{n})$.
\end{enumerate}
\end{lemma}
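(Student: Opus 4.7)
The plan is to handle (a) and (b) in turn, reducing the irreducibility claim in (b) to the classical criterion of Theorem~\ref{3.35} after one auxiliary substitution. For (a), I would first verify $\gcd(tr,e)=1$: the hypothesis $\gcd(ek,n)=1$ gives $\gcd(n,e)=1$, hence $\gcd(t,e)=1$ since $t\mid n/d\mid n$; and $nr\equiv 1\pmod e$ forces $\gcd(r,e)=1$. Consequently $\alpha^{tr}$ is still a primitive $e$-th root of unity, its $\F_q$-minimal polynomial has degree $\ord_e q = k$, and its $k$ distinct Galois conjugates are precisely the $\alpha^{trq^i}$. So $g_t(x)$ is exactly this minimal polynomial, which yields (a).

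For (b), the key trick is to absorb the scalar $\theta^u$ into a new auxiliary irreducible polynomial. Set $h(y):=g_t(\theta^u y)\in\F_q[y]$, so that $g_t(\theta^u x^t)=h(x^t)$. Since $\theta\in\F_q$, the Frobenius sends $\theta^{-u}\alpha^{tr}$ to $\theta^{-u}\alpha^{trq}$, and iterating identifies the roots of $h$ with the $k$ distinct Frobenius conjugates of $\theta^{-u}\alpha^{tr}$; hence $h$ is, up to the unit $\theta^{uk}$, the minimal polynomial of this element, and so is irreducible of degree $k$ over $\F_q$. Because $\gcd(d,e)\mid\gcd(n,e)=1$, the orders of $\theta^{-u}$ and $\alpha^{tr}$ are coprime and multiply, so the order of $h$ equals $d_u\cdot e$ where $d_u:=d/\gcd(u,d)$.

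I would then apply Theorem~\ref{3.35} to the pair $(h,t)$ to deduce irreducibility of $h(x^t)=g_t(\theta^u x^t)$. The three conditions are checked prime-by-prime for $p\mid t$. A single observation fuels the bookkeeping: from $t\mid n/d$ and $\rad(n)\mid q-1$ one sees that every such $p$ divides $d$ and moreover satisfies $\nu_p(n)>\nu_p(q-1)$, so $\nu_p(d)=\nu_p(q-1)$; combined with $\gcd(u,t)=1$ this gives $\nu_p(d_u)=\nu_p(d)$. Condition~(1) is immediate. For (2), $\gcd(n,ek)=1$ forces $\nu_p(k)=\nu_p(e)=0$, so Lemma~\ref{LEL} gives $\nu_p(q^k-1)=\nu_p(q-1)=\nu_p(d_u e)$ (using that $k$ is odd when $p=2$), which is exactly what is needed. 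For (3), $4\mid t$ combined with the valuation bound forces $8\mid n$, at which point the standing hypothesis $q\equiv 1\pmod 4$ suffices.

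To finish, I verify the divisibility $g_t(\theta^u x^t)\mid f(x^n)$ by computing $n$-th powers of roots: any root $\beta$ satisfies $\beta^t=\theta^{-u}\alpha^{trq^i}$ for some $i$, and writing $s=n/t$ (so $d\mid s$ by $t\mid n/d$) we obtain $\beta^n=\theta^{-us}\alpha^{nrq^i}=\alpha^{q^i}$, using $\theta^s=1$ and $nr\equiv 1\pmod e$. Hence $\beta$ is a root of the $\F_q$-polynomial $f(x^n)$, and the already-established irreducibility of $g_t(\theta^u x^t)$ over $\F_q$ forces it to divide $f(x^n)$. The only genuinely delicate step is the $p$-adic valuation bookkeeping for conditions (2) and (3) of Theorem~\ref{3.35}; once that is organized the rest of the argument is essentially formal.
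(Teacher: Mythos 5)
Your proposal is correct and follows essentially the same route as the paper: part (a) via the Galois conjugates of $\alpha^{tr}$ (using $\gcd(tr,e)=1$), and part (b) by computing the order of $g_t(\theta^u x)$ as $e\cdot d/\gcd(u,d)$, verifying the three conditions of Theorem~\ref{3.35} through the same $p$-adic valuation bookkeeping with Lemma~\ref{LEL}, and checking divisibility by raising a root relation to the $n/t$-th power. The only cosmetic difference is that you organize condition (2) prime-by-prime via $\nu_p(d_u)=\nu_p(d)$ where the paper first simplifies the gcd to $\gcd\bigl(t,(q^k-1)/d\bigr)$; the content is identical.
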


\begin{proof}
We split the proof into cases.

\begin{enumerate}[(a)]
\item Since $f\in \F_q[x]$ is irreducible of degree $k$ and $\alpha$ is one of its roots, $\alpha\in \mathbb F_{q^k}$ and $\alpha^{q^k}=\alpha$. In particular, $\sigma_{q}(g_{t}(x))=\displaystyle\prod_{i=0}^{k-1} (x-\alpha^{trq^{i+1}})=\displaystyle\prod_{i=0}^{k-1} (x-\alpha^{trq^{i}})$ and so $g_{t}\in \mathbb F_q [x]$. In order to prove that $g_t(x)$ is irreducible, it suffices to show that the least positive integer $i$ for which $\alpha^{rtq^{i}}=\alpha^{rt}$ is $i=k$. We observe that  
$\alpha^{rtq^{i}}=\alpha^{rt}$ if and only if $\alpha^{rt(q^{i}-1)}=1$, that is also equivalent to  
$rt(q^{i}-1)\equiv 0\pmod e$. The result follows from the fact that $\gcd(e,tr)=1$ and $\ord_{e} q=k$.

\item  Suppose that $\lambda$ is a root of $g_{t}(\theta^{u}x^{t})$, or equivalently,  $\lambda^{t}\theta^{u}$ is a root of $g_{t}(x)$. Then there exists $j\in \mathbb N$ such that $\lambda^t\theta^{u} =\alpha^{trq^j}$.  In particular, 
$$\alpha^{q^j}= \alpha^{nrq^j}=\left(\alpha^{trq^j}\right)^{\frac nt}=\left( \lambda^t\theta^{u}\right)^{\frac nt} =\lambda^n.$$ 
Therefore, $\lambda^n=\alpha^{q^j}$ is a root of $f(x)$ and so any root of $g_{t}(\theta^{u}x^{t})$ is also root of $f(x^n)$. This shows that $g_{t}(\theta^{u}x^{t})$ divides $f(x^n)$. Now we are left to prove that $g_t(\theta^{u}x^t)$ is, in fact, irreducible over $\F_q$. Since $g_t(\theta^{u}x)$ is irreducible, we only need to verify that $g_t(\theta^{u}x)$ and $t$ satisfy the conditions of Theorem \ref{3.35}. We observe that each root of $g_{t}(\theta^{u}x)$ is of the form $\theta^{-u}\alpha^{trq^i}$, hence
$$\ord\,g_{t}(\theta^{u}x)=\ord(\theta^{-u}\alpha^{trq^i})=\ord(\alpha^{trq^i}) \cdot\ord(\theta^{-u})=\dfrac{ed}{\gcd (u,\,d)},$$
where in the second equality we use the fact that $\gcd(e,n)=1$ and $d$ divides $n$. Since $\gcd (u, t)=1$ and $\rad(t)$ divides $d$, we have that
$\rad(t)$ divides $\frac{d}{\gcd (u,\,d)}$. In particular, $\rad(t)$ divides $\ord\,g_{t}(\theta^{u}x)$. Next, we verify that $\gcd\left( t,\,\dfrac{q^{k}-1}{\ord\,g_{t}(\theta^{u}x)}\right) =1$.
In fact, $$\gcd\left( t,\,\dfrac{q^{k}-1}{e\dfrac{d}{\gcd (u,d)}}\right) =\gcd \left( t,\,\dfrac{(q^{k}-1)\gcd (u,d)}{ed}\right).$$
In addition,  $\gcd (u,\,t)=1$, $\gcd (n,\,e)=1$ and $\rad(t)$ divides $d$ (that divides $n$). Therefore, $\gcd (t,\,e)=1$ and so
$$\gcd \left( t,\,\dfrac{(q^{k}-1)\gcd (u,d)}{ed}\right)=\gcd \left( t,\,\dfrac{q^{k}-1}{d}\right).$$
Let $p$ be any prime divisor of $t$, hence $p$ also divides $n$. Since $\gcd(n, k)=1$, we have that $p$ does not divide $k$. In particular, $\nu_p(k)=0$ and $k$ is odd if $p=2$. From Lemma~\ref{LEL}, we have that $\nu_{p}(q^{k}-1)=\nu_{p}(q-1)+\nu_{p}(k)$. Therefore, $\nu_{p}(q^{k}-1)=\nu_{p}(q-1)$.  In addition, $t$ divides $\frac {n}{d}$ and so $\nu_p(n)> \nu_p(d)$. Since
$$\nu_{p}(d)=\nu_p(\gcd(n,q-1))=\min\lbrace\nu_{p}(n),\,\nu_{p}(q-1)\rbrace,$$ we conclude that  $\nu_p(d)=\nu_p(q-1)$ and
$\nu_{p}\left(\dfrac{ q^{k}-1}{d}\right) =0$. But $p$ is an arbitrary prime divisor of $t$ and so we conclude that $\gcd \left( t,\,\dfrac{q^{k}-1}{d}\right)=1$. Finally, we need to verify that $q\equiv 1\pmod 4$ if $4$ divides $t$. Since $t$ divides $\frac{n}{\gcd(n, q-1)}$ and every prime divisor of $n$ divides $q-1$, if $4$ divides $t$, it follows that $\gcd(n, q-1)$ is even and $\frac{n}{\gcd(n, q-1)}$ is divisible by $4$. Therefore, $n$ is divisible by $8$. From our hypothesis, $q\equiv 1\pmod 4$.
\end{enumerate}
\end{proof}

The main result of this paper is stated as follows.

\begin{theorem}\label{pri1}
Let $f\in \F_q[x]$ be a monic irreducible polynomial of degree $k$ and order $e$. Let $q, n, r, \theta$  and $g_t(x)$ be as in Lemma \ref{lepri1}. Then $f(x^{n})$ splits into monic irreducible polynomials over $\F_q$ as
 \begin{equation}\label{produtoirreductiveis}
 f(x^{n})=\displaystyle\prod_{t|m}\displaystyle\prod_{{1\leq u\leq d}\atop{\gcd(u,t)=1}}\theta^{-uk} g_t(\theta^{u}x^{t})
 \end{equation}
where $m:=\frac nd=\frac n{\gcd(n,q-1)}$.
\end{theorem}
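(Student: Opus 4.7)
The plan is to combine Lemma \ref{lepri1}(b) with a root-parameterization argument and an elementary counting step. Lemma \ref{lepri1}(b) already guarantees that each $g_t(\theta^u x^t)$ is irreducible over $\F_q$ and divides $f(x^n)$; since $g_t$ is monic of degree $k$, the polynomial $g_t(\theta^u x^t)$ has leading coefficient $\theta^{uk}$, and multiplying by $\theta^{-uk}$ makes it monic without affecting irreducibility or divisibility. Because $f(x^n)$ itself is monic, the identity \eqref{produtoirreductiveis} will follow once I show that the factors on the right-hand side are pairwise distinct and that their degrees sum to $\deg f(x^n) = kn$.

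For distinctness I would describe the root sets explicitly. Fix a root $\alpha$ of $f$ and let $U_n \subset \overline{\F}_q^{*}$ denote the group of $n$-th roots of unity, which has order $n$ because $\rad(n) \mid q-1$ forces $\gcd(n,q)=1$. Using $nr \equiv 1 \pmod{e}$ one first checks that every root of $f(x^n)$ can be written as $\xi\,\alpha^{r q^{j}}$ with $\xi \in U_n$ and $j \in \{0,\dots,k-1\}$, and this representation is unique because $U_n \cap \langle \alpha \rangle = \{1\}$; the latter is itself a consequence of $\gcd(n,e) = 1$. Substituting $\lambda = \xi\,\alpha^{r q^{j}}$ into the defining equation $\theta^{u}\lambda^{t} = \alpha^{t r q^{i}}$ of a root of $g_t(\theta^u x^t)$ and applying the same coprimality argument, this equation reduces cleanly to $\xi^{t} = \theta^{-u}$ together with $i \equiv j \pmod{k}$. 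Hence the root set of $\theta^{-uk}g_t(\theta^u x^t)$ is
\[
\{\xi\,\alpha^{r q^{j}} : \xi^{t} = \theta^{-u},\ 0 \le j \le k-1\},
\]
and if two such factors shared a root, comparing degrees would force $t = t'$ while comparing $\theta^{-u} = \theta^{-u'}$ would force $u = u'$ because $\theta$ has order $d$ and $1 \le u, u' \le d$.

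For the degree count, observe that $t \mid m$ combined with $\rad(m)\mid q-1$ and $t \mid n$ yields $\rad(t) \mid \gcd(n,q-1) = d$. A standard M\"obius computation then gives
\[
\#\{u : 1 \le u \le d,\ \gcd(u,t)=1\} \;=\; \frac{d\,\varphi(t)}{t},
\]
so summing $tk \cdot d\varphi(t)/t = dk\,\varphi(t)$ over $t \mid m$ and using $\sum_{t \mid m} \varphi(t) = m$ produces $dkm = kn = \deg f(x^n)$, as required.

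The main technical point is the root parameterization in the middle paragraph: everything hinges on the triviality of $U_n \cap \langle \alpha \rangle$, which is where the hypothesis $\gcd(ek,n) = 1$ (specifically $\gcd(n,e)=1$) pays off. Granted that ingredient, both the unique representation $\xi\,\alpha^{r q^{j}}$ and the splitting of $\xi^{t}\theta^{u} = \alpha^{t r (q^{i} - q^{j})}$ into the pair $\xi^{t} = \theta^{-u}$ and $i \equiv j \pmod{k}$ follow immediately, and the remaining bookkeeping is routine.
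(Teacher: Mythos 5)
Your proof is correct and follows essentially the same strategy as the paper: invoke Lemma \ref{lepri1}(b) for irreducibility and divisibility, verify the factors are pairwise distinct, and conclude by matching degrees via $\sum_{t\mid m}\frac{d\varphi(t)}{t}\cdot tk = dkm = nk$. Your distinctness step is phrased through an explicit parameterization of the root sets $\{\xi\,\alpha^{rq^j}:\xi^t=\theta^{-u}\}$ rather than the paper's direct comparison of roots of $g_t(\theta^u y)$ and $g_t(\theta^w y)$, but both reduce to the same fact that $\gcd(n,e)=1$ forces the $n$-th roots of unity and $\langle\alpha\rangle$ to intersect trivially.
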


\begin{proof}
We observe that the factors $g_{t}(\theta^{u}x^{t})$ for $1\leq u\leq d$ with $\gcd (u,\,t)=1$ are all distinct. In fact,  if $g_{t}(\theta^{u}x^{t})=g_{t}(\theta^{w}x^{t})$  for some $1\leq u, w\leq d$ with $\gcd(u, t)=\gcd (w,\,t)=1$, 
there exists a nonnegative integer $i$ such that  $\theta^{-u}\alpha^{tr}=\theta^{-w}\alpha^{trq^{i}}$, that is equivalent to $\theta^{w-u}=\alpha^{tr(q^{i}-1)}$ and then
$$\left( \theta^{u-w}\right) ^{e}=\left( \alpha^{tr(q^{i}-1)}\right)^{e}=1.$$
The last equality entails that the order of $\theta$ divides $(w-u)e$. From hyothesis, $\gcd(n, e)=1$, hence $\gcd(d,e)=1$ and so $d$ divides $w-u$. Since $|w-u|<d$, we necessarily have $w=u$. Let $h(x)$ be the polynomial defined by the product at the RHS of  Eq.~\eqref{produtoirreductiveis}. Since the factors of this product are monic, pairwise distinct. and divide $f(x^n)$, $h(x)$ is monic and divides $f(x^n)$. In particular, the equality $f(x^n)=h(x)$ holds if and only if $h(x)$ and $f(x^n)$ have the same degree, i.e., $\deg(h(x))=nk$. Since $t$ divides $m$, $\rad(t)$ divides $\rad(n)$ (hence divides $q-1$) and so $\rad(t)$ divides $d$. Therefore, for each divisor $t$ of $\frac{n}{d}$, the number of polynomials of the form $\theta^{-uk} g_t(\theta^{u}x^{t})$ with $1\le u\le d$ and $\gcd(u,t)=1$ equals $\dfrac{d}{\rad(t)}\varphi(\rad(t))=\dfrac{d\varphi(t)}{t}$, where $\varphi$ is the {\em Euler Phi function}. In particular, the degree of $h(x)$ equals
$$\sum_{t|m} \dfrac{d\varphi(t)}{t}tk=dk\sum_{t|m}\varphi(t)=dkm=nk.$$
\end{proof}

\begin{corollary}\label{conta1}
Let $f(x)$, $n$ and $m$ be as in Theorem \ref{pri1}.  For each divisor $t$ of $m$, the number of irreducible factors of $f(x^n)$ of degree $kt$ is  $\dfrac{\varphi(t)}{t}.\gcd (n,\,q-1)$. The total number of irreducible factors of $f(x^n)$ in $\mathbb F_q[x]$ is 
$$\gcd (n,\,q-1).\displaystyle\prod_{{p|m}\atop{p\text{ prime}}}\left(1+\nu_{p}(m)\dfrac{p-1}{p}\right) .$$ 
\end{corollary}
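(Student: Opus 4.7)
The proof plan follows almost mechanically from the explicit factorization in Theorem~\ref{pri1}, so the task is really one of bookkeeping plus a standard multiplicative-function computation.

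First I would fix a divisor $t$ of $m$ and read off from \eqref{produtoirreductiveis} that every irreducible factor of $f(x^n)$ of the form $\theta^{-uk}g_t(\theta^u x^t)$ has degree exactly $kt$, because $g_t$ has degree $k$ and we substitute $x\mapsto \theta^u x^t$. Since distinct divisors $t,t'$ of $m$ produce factors of distinct degrees $kt\ne kt'$, and since the factors in \eqref{produtoirreductiveis} are pairwise distinct (this was verified at the start of the proof of Theorem~\ref{pri1}), the number of irreducible factors of $f(x^n)$ of degree $kt$ is exactly the cardinality of
\[
\{u : 1\le u\le d,\ \gcd(u,t)=1\}.
\]
This cardinality has already been computed in the proof of Theorem~\ref{pri1}: since $\rad(t)\mid d$, it equals $\tfrac{d}{\rad(t)}\varphi(\rad(t))=\tfrac{d\,\varphi(t)}{t}$, which (using $d=\gcd(n,q-1)$) gives the claimed $\tfrac{\varphi(t)}{t}\cdot\gcd(n,q-1)$.

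For the total count I would just sum this over divisors $t$ of $m$:
\[
N \;=\; d\sum_{t\mid m}\frac{\varphi(t)}{t}.
\]
The arithmetic function $t\mapsto \varphi(t)/t$ is multiplicative, so $t\mapsto \sum_{s\mid t}\varphi(s)/s$ is multiplicative as well, and on a prime power $p^a$ it evaluates to $1+\sum_{s=1}^{a}\frac{\varphi(p^s)}{p^s}=1+a\cdot\frac{p-1}{p}$. Applying this with $a=\nu_p(m)$ at each prime $p\mid m$ gives
\[
\sum_{t\mid m}\frac{\varphi(t)}{t}\;=\;\prod_{\substack{p\mid m\\ p\text{ prime}}}\left(1+\nu_p(m)\cdot\frac{p-1}{p}\right),
\]
and multiplying by $d=\gcd(n,q-1)$ yields the stated formula.

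There is no real obstacle: the only non-formal ingredient is the counting identity $\#\{1\le u\le d:\gcd(u,t)=1\}=d\varphi(t)/t$, and that was already established inside the proof of Theorem~\ref{pri1}, so I would simply cite it rather than redo it. The rest is the multiplicativity computation above, which I would present compactly.
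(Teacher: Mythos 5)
Your proposal is correct and follows essentially the same route as the paper: both read off from Eq.~\eqref{produtoirreductiveis} that the factors with parameter $t$ have degree $kt$, reuse the count $\frac{d}{\rad(t)}\varphi(\rad(t))=\frac{d\varphi(t)}{t}$ already established in the proof of Theorem~\ref{pri1}, and then sum over $t\mid m$. The only difference is that you carry out the multiplicativity computation $\sum_{t\mid m}\varphi(t)/t=\prod_{p\mid m}\bigl(1+\nu_p(m)\frac{p-1}{p}\bigr)$ explicitly, whereas the paper delegates it to Corollary 3.2b of \cite{BGO}; your version is self-contained and equally valid.
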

\begin{proof}
This result is essentially the same result of Corollary 3.2b in \cite{BGO}. We know, from Theorem~\ref{pri1}, that $f(x^n)$ splits into monic irreducible polynomials as
$$f(x^{n})=\displaystyle\prod_{t|m}\displaystyle\prod_{{1\leqslant u\leqslant d}\atop{\gcd (u,\,t)=1}}\theta^{-ku}g_{t}(\theta^{u}x^{t}).$$
In the proof of Theorem \ref{pri1}, we show that for each divisor $t$ of $m$, the number of polynomials of the form $g_t(\theta^{u}x^{t})$ equals
$$\frac{\varphi(\rad (t))}{\rad (t)}d= \frac{\varphi(\rad (t))}{\rad (t)}\gcd(n, q-1)=\frac{\varphi(t)}{t}\gcd(n, q-1)$$
Therefore, the total number of irreducible factors of $f(x^n)$ equals:
$$\sum_{t|m} \dfrac{\varphi(t)}{t}\gcd (n,\,q-1)=\gcd (n,\,q-1)\sum_{t|m} \dfrac{\varphi(t)}{t}=\gcd (n,\,q-1)\prod_{{p|m}\atop{p\text{ prime}}}\left( 1+\nu_{p}(m)\frac{p-1}{p}\right),$$
where the last equality is obtaned following the same steps as in the proof of Corollary 3.2b in \cite{BGO}. 
\end{proof}

\begin{example}
Let $f(x)=x^{3}+4x^{2}+6x+1\in \mathbb{F}_{11}[x]$ and $n=5^{a}, a\ge 1$. 
We observe that $f(x)$ is an irreducible polynomial of order $14$ over $\mathbb{F}_{11}$. In fact,
$$\Phi_{14}(x)=\dfrac{x^{14}-1}{(x^{7}-1)(x+1)}
=f(x)h(x) \text{ and $\ord_{14}(11) =3$,}$$
where $h(x)=x^{3}+6x^{2}+4x+1$. In particular, $f(x)$ and $n$ satisfy the conditions of Theorem \ref{pri1} and, following the notation of Theorem~\ref{pri1}, we can take $\theta=3$. For $n=5$, we have that
$r=3$ and $$g_{1}(x)=
(x-\alpha)(x-\alpha^{-3})(x-\alpha^{-5})=f(x).$$
Therefore,
$$f(x^5)=\prod_{1\le u \le5}  9^uf(3^ux).$$
If $a\ge 2$, we can take $r=3^{a}$ and for each integer $0\le b\le a-1$,
$$g_{_{5^{b}}}(x)=\begin{cases}
(x-\alpha)(x-\alpha^{9})(x-\alpha^{11})&\text{ if  $a-b$ is odd}\\
(x-\alpha^3)(x-\alpha^{5})(x-\alpha^{13})&\text{ if  $a-b$ is even.}
\end{cases}
$$
Therefore, $$f(x^{5^{a}})=\prod_{1\le u \le5}  9^uf(3^ux)\cdot \displaystyle\prod_{1\le b<a \atop \text{$b$  even}}\displaystyle\prod_{u=1}^4 9^uf(3^{u}x^{5^{b}})\displaystyle\prod_{1\le b<a \atop b\text{ odd}}\displaystyle\prod_{u=1}^4 9^uh(3^{u}x^{5^{b}}).$$
\end{example}

\subsection{Applications of Theorem~\ref{pri1}} Here we present some interesting situations where Theorem~\ref{pri1} gives a more explicit result. We recall that, for a positive integer $e$ with $\gcd(e, q)=1$, the $e$-th cyclotomic polynomial $\Phi_e(x)\in \F_q[x]$ is defined recursively by the identity $x^e-1=\prod_{m|e}\Phi_m(x)$. Of course, any irreducible factor of $\Phi_e(x)$ has order $e$. The following classical result provides the degree distribution of the irreducible factors of cyclotomic polynomials.

\begin{theorem}[\cite{LiNi} Theorem 2.47] \label{thm:lini}
Let $e$ be a positive integer such that $\gcd(e, q)=1$ and set $k=\ord_eq$. Then $\Phi_e(x)$ factors as $\frac{\varphi(e)}{k}$ irreducible polynomials over $\F_q$, each of degree $k$.
\end{theorem}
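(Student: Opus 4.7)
The plan is to identify the roots of $\Phi_e(x)$, determine the degree of each irreducible factor via Frobenius orbits, and then count. First I would observe that by its recursive definition, $\Phi_e(x)$ is precisely the monic polynomial whose roots are the $\varphi(e)$ primitive $e$-th roots of unity in $\overline{\F}_q$; these roots are simple since $\gcd(e,q)=1$ makes $x^e-1$ separable over $\F_q$. In particular, $\deg \Phi_e(x)=\varphi(e)$.

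Next, for a fixed primitive $e$-th root of unity $\zeta$, let $g_\zeta(x)\in \F_q[x]$ denote its minimal polynomial. Its degree equals $[\F_q(\zeta):\F_q]$, which is the least positive integer $d$ such that $\zeta\in\F_{q^d}$, equivalently $\zeta^{q^d}=\zeta$, equivalently $e\mid q^d-1$. By definition, this smallest $d$ is $k=\ord_e q$. The key observation is that this argument is independent of the specific primitive $e$-th root chosen, since the divisibility condition $e\mid q^d-1$ depends only on $e$ and $q$. Consequently, every irreducible factor of $\Phi_e(x)$ has the same degree $k$, and the $\varphi(e)$ primitive roots partition into Frobenius orbits of uniform size $k$.

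Finally I would count: if $\Phi_e(x)$ factors into $N$ monic irreducible polynomials over $\F_q$, each of degree $k$, then comparing degrees yields $Nk=\varphi(e)$, hence $N=\varphi(e)/k$. The divisibility $k\mid \varphi(e)$ needed here is automatic from Lagrange's theorem applied to the cyclic subgroup generated by $q$ in $(\Z/e\Z)^*$. I don't anticipate a serious obstacle; the only point requiring care is the uniformity of Frobenius orbit sizes among the primitive roots, which is precisely the observation in the previous paragraph.
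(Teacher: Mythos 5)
Your proof is correct. The paper itself states this result without proof, citing Lidl--Niederreiter, and your argument is precisely the standard one used there: each primitive $e$-th root of unity $\zeta$ has order exactly $e$, so its degree over $\F_q$ is the least $d$ with $e\mid q^d-1$, namely $k=\ord_e q$, and the count $\varphi(e)/k$ follows by comparing degrees; the one point needing care (uniformity of the orbit sizes) is exactly the one you flagged and justified.
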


The following result entails that, if we know the factorization of $\Phi_e(x)$ (resp. $x^e-1$), under suitable conditions on the positive integer $n$, we immediately obtain the factorization of $\Phi_e(x^n)$ (resp. $x^{en}-1$).

\begin{theorem}\label{thm:app}
Let $e$ be a positive integer such that $\gcd(e, q)=1$, set $k=\ord_eq$ and $l=\frac{\varphi(e)}{k}$. Additionally, suppose that $n$ is a positive integer such that $\rad(n)$ divides $q-1$, $\gcd(n, ke)=1$ and $q\equiv 1\pmod 4$ if $n$ is divisible by $8$. If $\Phi_e(x)=\prod_{1\le i\le l}f_i(x)$ is the factorization of $\Phi_e(x)$ over $\F_q$, then the factorization of $\Phi_e(x^n)$ over $\F_q$ is given as follows
\begin{equation}\label{eq:app1}\Phi_e(x^n)=\prod_{i=1}^l\prod_{t|m}\prod_{{1\leq u\leq d}\atop{\gcd(u,t)=1}}\theta^{-uk} f_i(\theta^{u}x^{t}),\end{equation}
where $d=\gcd(n, q-1)$, $m=\frac{n}{d}$ and $\theta\in \F_q$ is any element of order $d$. In particular, if $x^e-1=\prod_{i=1}^{N}F_i(x)$ is the factorization of $x^e-1$ over $\F_q$, then the factorization of $x^{en}-1$ over $\F_q$ is given as follows
\begin{equation}\label{eq:app2}x^{en}-1=\prod_{i=1}^N\prod_{t|m}\prod_{{1\leq u\leq d}\atop{\gcd(u,t)=1}}\theta^{-u\cdot \deg(F_i)} F_i(\theta^{u}x^{t}).\end{equation}
\end{theorem}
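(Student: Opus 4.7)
The plan is to reduce both identities to direct applications of Theorem~\ref{pri1}. For the first identity, each irreducible factor $f_i(x)$ of $\Phi_e(x)$ has degree $k = \ord_e q$ and order $e$ (by Theorem~\ref{thm:lini} together with the fact that roots of $\Phi_e$ have multiplicative order $e$). I would first check that each $f_i$ and $n$ satisfy the hypotheses of Theorem~\ref{pri1}: the condition $\rad(n)\mid q-1$ is given, the condition $\gcd(n,ek)=1$ is exactly the hypothesis $\gcd(n,ke)=1$, and the $8\mid n$ condition is also carried over. Then Theorem~\ref{pri1} yields, for every $i$,
\[
f_i(x^n)=\prod_{t\mid m}\prod_{\substack{1\le u\le d\\ \gcd(u,t)=1}}\theta^{-uk}\,g_{t,i}(\theta^u x^t),
\]
where $g_{t,i}$ is the analogue of $g_t$ constructed from a root of $f_i$. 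Taking the product over $i=1,\dots,l$ and using $\Phi_e(x^n)=\prod_i f_i(x^n)$ gives Equation~\eqref{eq:app1}, provided one notes that the inner factors $g_{t,i}(\theta^u x^t)$ are, up to reordering, exactly the $f_i(\theta^u x^t)$ that appear in the claimed identity: indeed, for $t=1$ and $u$ running over a suitable set, each $g_{1,i}$ equals $f_i$, and the rest of the factorization is the standard expansion supplied by Theorem~\ref{pri1}.

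For the second identity, I would factor $x^e-1=\prod_{m\mid e}\Phi_m(x)$, so that each irreducible factor $F_i$ of $x^e-1$ is an irreducible factor of some $\Phi_{m_i}(x)$ with $m_i\mid e$. Thus $F_i$ has order $m_i$ and degree $k_i=\ord_{m_i}q$. The key observation is that $m_i\mid e$ forces $q^k\equiv 1\pmod{m_i}$, so $k_i\mid k$; combined with $\gcd(n,ke)=1$ this yields $\gcd(n,k_im_i)=1$, which is precisely the condition needed to apply Theorem~\ref{pri1} to $F_i(x^n)$. The remaining two hypotheses ($\rad(n)\mid q-1$ and, if $8\mid n$, $q\equiv 1\pmod 4$) are unchanged. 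Applying Theorem~\ref{pri1} to each $F_i$ (with $k$ replaced by $\deg(F_i)$) and multiplying over $i$ gives Equation~\eqref{eq:app2}.

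There is no real obstacle here; the content of the theorem is bookkeeping. The only point that requires a little care is the divisibility $\ord_{m_i}q\mid\ord_eq$ for $m_i\mid e$, which is what allows the global condition $\gcd(n,ke)=1$ to descend to each individual irreducible factor in the second part. Once that is noted, both identities are immediate consequences of applying Theorem~\ref{pri1} factorwise.
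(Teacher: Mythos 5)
Your overall strategy (apply Theorem~\ref{pri1} to each irreducible factor $f_i$ of $\Phi_e$, multiply over $i$, and for the binomial use $x^{en}-1=\prod_{m\mid e}\Phi_m(x^n)$ together with $\ord_m q\mid\ord_e q$) is exactly the paper's, and your treatment of the second identity is fine. But there is a genuine gap in the first part, at precisely the one non-trivial point of the theorem. Theorem~\ref{pri1} gives $f_i(x^n)=\prod_{t\mid m}\prod_u\theta^{-uk}g_{t,i}(\theta^u x^t)$, where $g_{t,i}$ is built from a root $\alpha_i$ of $f_i$ via $g_{t,i}(x)=\prod_{j}(x-\alpha_i^{trq^j})$. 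To pass from this to Eq.~\eqref{eq:app1} you must show that, for each fixed divisor $t$ of $m$, the multiset $\{g_{t,1},\dots,g_{t,l}\}$ coincides with $\{f_1,\dots,f_l\}$, i.e.\ that $\prod_i g_{t,i}=\Phi_e$. You assert this (``up to reordering, exactly the $f_i$'') but your only justification is the claim that for $t=1$ each $g_{1,i}$ equals $f_i$; that claim is false in general, since $g_{1,i}$ is the minimal polynomial of $\alpha_i^{r}$ with $nr\equiv 1\pmod e$, which is a conjugate of $\alpha_i$ only when $r$ is congruent to a power of $q$ modulo $e$. Only the \emph{set} equality holds, and it must be proved, for every $t$, not just $t=1$.

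The missing argument is short but essential: each $g_{t,i}$ is monic irreducible of order $e$, hence divides $\Phi_e$; since there are exactly $l$ monic irreducible factors of $\Phi_e$, it suffices to show the $g_{t,i}$ are pairwise distinct in $i$. If $g_{t,i}=g_{t,j}$, then $\alpha_i^{rt}=\alpha_j^{rtq^h}$ for some $h$, and raising both sides to the power $\tfrac{n}{t}$ (using $nr\equiv 1\pmod e$) gives $\alpha_i=\alpha_j^{q^h}$, so $\alpha_i$ and $\alpha_j$ are conjugate and $f_i=f_j$, forcing $i=j$. Without this step the substitution of $f_i$ for $g_{t,i}$ inside the triple product is unjustified, so you should add it before calling the rest bookkeeping.
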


\begin{proof}
We observe that, for any $1\le i\le l$, $f_i(x)$ has order $e$ and, from Theorem~\ref{thm:lini}, $f_i$ has degree $k$. In particular, from hypothesis, we are under the conditions of Theorem~\ref{pri1}. For each divisor $t$ of $m$, let $g_{t, i}$ be the polynomial of degree $k$ and order $e$ associated to $f_i$ as in Lemma~\ref{lepri1}. From Theorem~\ref{pri1}, we have that
$$f_i(x^n)=\prod_{t|m}\prod_{{1\leq u\leq d}\atop{\gcd(u,t)=1}}\theta^{-uk} g_{i, t}(\theta^{u}x^{t}),$$
hence
$$\Phi_e(x^n)=\prod_{i=1}^lf_i(x^n)=\prod_{i=1}^l\prod_{t|m}\prod_{{1\leq u\leq d}\atop{\gcd(u,t)=1}}\theta^{-uk} g_{i, t}(\theta^{u}x^{t}).$$

In particular, in order to obtain Eq.~\eqref{eq:app1},it suffices to prove that, for any divisor $t$ of $m$, we have the following equality
\begin{equation}\label{eq:cyclo}\prod_{i=1}^lg_{t, i}(x)=\prod_{i=1}^lf_i(x).\end{equation}
Since each $g_{t, i}$ is irreducible of order $e$, $g_{t, i}$ divides $\Phi_e(x)=\prod_{1\le i\le l}f_i(x)$ and so Eq.~\eqref{eq:cyclo} holds if and only if the polynomials $g_{t, i}$ are pairwise distinct. We then verify that such polynomials are, in fact, pairwise distinct. For this, if $g_{t, i}=g_{t, j}$ for some $1\le i, j\le l$, from definition, there exist roots $\alpha_i, \alpha_j$ of $f_i$ and $f_j$, respectively, such that
$$\alpha_i^{rt}=\alpha_j^{rtq^{h}},$$
for some $h\ge 0$, where $r$ is a positive integer such that $rn\equiv 1\pmod e$. Raising the $\frac{n}{t}$-th power in the last equality, we obtain $\alpha_i=\alpha_j^{q^h}$, i.e., $\alpha_i$ and $\alpha_j^{q^h}$ are conjugates over $\F_q$. Therefore, they have the same minimal polynomial, i.e., $f_i=f_j$ and so $i=j$. This concludes the proof of Eq.~\eqref{eq:app1}. We now proceed to Eq.~\eqref{eq:app2}. From definition,
$$x^{en}-1=\prod_{i=1}^NF_i(x^n)=\prod_{m|e}\Phi_m(x^n).$$
In particular, in order to prove Eq.~\eqref{eq:app2}, we only need to verify that Eq.~\eqref{eq:app1} holds when replacing $e$ by any of its divisors. If $m$ divides $e$, $k'=\ord_mq$ divides $k=\ord_mq$. Therefore, $\gcd(n, ke)=1$ implies that $\gcd(n, k'm)=1$. In particular,  Eq.~\eqref{eq:app1} holds for $e=m$.
\end{proof}

As follows, we provide some particular situations where Theorem~\ref{thm:app} naturally applies. It is well-known that if $P$ is an odd prime number and $a$ is a positive integer such that $\gcd(a, P)=1$ and $\ord_{P^2}a=\varphi(P^2)=P(P-1)$, then $\ord_{P^s}a=\varphi(P^s)$ for any $s\ge 1$ (i.e., if $a$ is a primitive root modulo $P^2$, then is a primitive root modulo $P^s$ for any $s\ge 1$). In particular, from Theorem~\ref{thm:lini}, we have the following well known result.

\begin{corollary}
Let $P$ be an odd prime number such that $\ord_{P^2}q=\varphi(P^2)=P(P-1)$, i.e., $q$ is a primitive root modulo $P^2$. Then, for any $s\ge 1$ and $1\le i\le s$, $\Phi_{P^i}(x)$ is irreducible over $\F_q$. In particular, for any $s\ge 1$, the factorization of $x^{P^s}-1$ over $\F_q$ is given by
$$x^{P^s}-1=(x-1)\prod_{i=1}^s\Phi_{P^i}(x).$$
\end{corollary}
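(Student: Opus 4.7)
The plan is essentially to chain together the two facts flagged in the paragraph immediately preceding the corollary. First I would invoke the classical number-theoretic fact (already recalled in the text) that if $q$ is a primitive root modulo $P^2$ then $q$ is a primitive root modulo $P^s$ for every $s\ge 1$; equivalently, $\ord_{P^s}q=\varphi(P^s)=P^{s-1}(P-1)$ for all $s\ge 1$, and hence $\ord_{P^i}q=\varphi(P^i)$ for every $1\le i\le s$.

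Next I would apply Theorem~\ref{thm:lini} to the modulus $P^i$: it asserts that $\Phi_{P^i}(x)$ factors into $\frac{\varphi(P^i)}{\ord_{P^i}q}$ monic irreducible factors over $\F_q$, each of degree $\ord_{P^i}q$. By the previous step this ratio is $1$, so $\Phi_{P^i}(x)$ itself is irreducible over $\F_q$ for every $1\le i\le s$. This is the first conclusion of the corollary.

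For the displayed factorization, I would start from the universal cyclotomic identity $x^N-1=\prod_{d\mid N}\Phi_d(x)$, specialized to $N=P^s$. Since $P$ is prime, the divisors of $P^s$ are exactly $1,P,P^2,\dots,P^s$, giving
\[
x^{P^s}-1=\Phi_1(x)\prod_{i=1}^{s}\Phi_{P^i}(x)=(x-1)\prod_{i=1}^{s}\Phi_{P^i}(x).
\]
Combined with the irreducibility established above, this yields the full factorization over $\F_q$. There is no real obstacle: the whole argument is a direct application of the classical primitive-root lifting lemma and Theorem~\ref{thm:lini}, with Theorem~\ref{thm:app} not even needed since $n=1$ in this case and the factorization is handled purely by cyclotomic decomposition.
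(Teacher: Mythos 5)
Your proposal is correct and follows exactly the route the paper intends: the paper states this corollary without a written proof, pointing only to the primitive-root lifting fact in the preceding paragraph and to Theorem~\ref{thm:lini}, which is precisely the chain you carry out (plus the standard identity $x^{P^s}-1=\prod_{d\mid P^s}\Phi_d(x)$). No gaps; the only implicit hypothesis, $\gcd(q,P)=1$, is guaranteed by $q$ being a primitive root modulo $P^2$.
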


Combining the previous corollary with Eq.~\eqref{eq:app2} we obtain the following result.

\begin{corollary}\label{cor:app}
Let $P$ be an odd prime number such that $\ord_{P^2}q=\varphi(P^2)=P(P-1)$, i.e., $q$ is a primitive root modulo $P^2$. Additionally, let $n$ be a positive integer such that $\rad(n)$ divides $q-1$ and $\gcd(n, P-1)=1$. Set $d=\gcd(n, q-1)$ and $m=\frac{n}{d}$. If $\theta$ is any element in $\F_q$ of order $d$ then, for any $s\ge 1$, the factorization of $\Phi_{P^s}(x^n)$ and  $x^{P^sn}-1$ over $\F_q$ are given as follows
$$\Phi_{P^s}(x^n)=\prod_{t|m}\prod_{{1\leq u\leq d}\atop{\gcd(u,t)=1}}\theta^{u\cdot \varphi(P^s)} \Phi_{P^s}(\theta^{u}x^{t}),$$
and
$$x^{P^sn}-1=\left(\prod_{t|m}\prod_{{1\leq u\leq d}\atop{\gcd(u,t)=1}}(x^t-\theta^u) \right) \times \left(\prod_{1\le i\le s}\prod_{t|m}\prod_{{1\leq u\leq d}\atop{\gcd(u,t)=1}}\theta^{u\cdot \varphi(P^i)} \Phi_{P^i}(\theta^{u}x^{t})\right).$$
\end{corollary}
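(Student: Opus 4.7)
The plan is to deduce the corollary as a direct application of Theorem~\ref{thm:app} (specifically Eq.~\eqref{eq:app1}) combined with the preceding corollary, which guarantees that $\Phi_{P^i}(x)$ is already irreducible over $\F_q$ for every $1\le i\le s$. Under that irreducibility the factorization $\Phi_{P^i}(x)=\prod_j f_j(x)$ collapses to the single factor $f_1=\Phi_{P^i}$, so Eq.~\eqref{eq:app1} takes the compact form appearing in the statement.

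Before invoking Theorem~\ref{thm:app}, I would verify its hypotheses with $e=P^i$ for each $1\le i\le s$. Writing $k_i=\ord_{P^i}q=\varphi(P^i)=P^{i-1}(P-1)$, the coprimality condition $\gcd(n,k_iP^i)=1$ amounts to $\gcd(n,P^{2i-1}(P-1))=1$. The key point is that $P\nmid q-1$: indeed, if $q\equiv 1\pmod P$, then writing $q=1+cP$ and expanding $q^P$ by the binomial theorem gives $q^P\equiv 1\pmod{P^2}$, forcing $\ord_{P^2}q\le P<P(P-1)$ and contradicting the primitive-root hypothesis. Since $\rad(n)\mid q-1$, this immediately gives $P\nmid n$, and together with the assumption $\gcd(n,P-1)=1$ the required coprimality follows. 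The parity condition ``$q\equiv 1\pmod 4$ if $8\mid n$'' that appears in Theorem~\ref{thm:app} must be understood as implicit here.

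With the hypotheses verified, Eq.~\eqref{eq:app1} applied with $e=P^s$ yields the first identity directly, and the same formula applied to each $\Phi_{P^i}$ with $1\le i\le s$ gives the analogous expression for $\Phi_{P^i}(x^n)$. For the second identity, I would start from the classical decomposition
\[
x^{P^sn}-1=(x^n-1)\prod_{i=1}^{s}\Phi_{P^i}(x^n),
\]
assemble the factorizations of the $\Phi_{P^i}(x^n)$ just obtained, and handle the remaining factor $x^n-1$ via Theorem~\ref{pri1} applied to $f(x)=x-1$ (so that $k=e=1$, $r=1$, and every $g_t$ collapses to $x-1$). This produces $x^n-1=\prod_{t\mid m}\prod_{1\le u\le d,\,\gcd(u,t)=1}(x^t-\theta^{-u})$. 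The reindexing $u\mapsto d-u$ preserves the coprimality condition (because every prime factor of $t$ divides $d$, so $\gcd(u,t)=1$ implies $\gcd(d-u,t)=1$) and sends $\theta^{-u}$ to $\theta^{u}$, so the resulting product matches the form stated in the corollary.

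The only step that is not purely mechanical is the primitive-root argument forcing $P\nmid q-1$ (and hence $P\nmid n$); once this is in place, the whole corollary is a routine assembly of Theorem~\ref{thm:app}, the preceding corollary, and the cyclotomic decomposition of $x^{P^s}-1$.
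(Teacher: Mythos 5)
Your proposal is correct and follows essentially the same route as the paper, which derives this corollary in one line by combining the preceding corollary (irreducibility of each $\Phi_{P^i}$ over $\F_q$) with Eq.~\eqref{eq:app2} of Theorem~\ref{thm:app}. Your added verification that $P\nmid q-1$ (hence $P\nmid n$), your note that the condition ``$q\equiv 1\pmod 4$ if $8\mid n$'' is tacitly assumed, and the reindexing $u\mapsto d-u$ for the $x^n-1$ factor are all sound details that the paper leaves implicit.
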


Corollary~\ref{cor:app} extends Lemma 5.3 in~\cite{MR18}, where the case $n$ a power of a prime $r\ne P$ that divides $q-1$ (but does not divide $P-1$)  is considered. As follows, we provide some examples of Corollary~\ref{cor:app}. 

\begin{example}
Let $q$ be a prime power such that $q-1\equiv 3, 6\pmod 9$ and $q$ is primitive modulo $25=5^2$ (there exist $2\cdot \varphi(20)=16$ residues modulo $225=9\cdot 25$ with this property). We observe that Corollary~\ref{cor:app} applies to $P=5$ and $n=3^a$ with $a\ge 1$. We have that $d=\gcd(q-1, 3^{a})=3$ and $m=3^{a-1}$. If $\theta\in \F_q$ is any element of order $3$, for any $a, s\ge 1$, we have that
$$x^{5^{s}3^{a}}-1=\left(\prod_{t=0}^{a-1}\prod_{{1\leq u\leq 3}\atop{\gcd(u,3^t)=1}}(x^{3^{t}}-\theta^u) \right)\times \left(\prod_{i=1}^{s}\prod_{t=0}^{a-1}\prod_{{1\leq u\leq 3}\atop{\gcd(u, 3^t)=1}}\theta^{-u\cdot \varphi(5^{i})}\Phi_{5^i}(\theta^{u}x^{3^t})\right),$$
where $\Phi_{5^i}(x)=x^{4\cdot 5^{i-1}}+x^{3\cdot 5^{i-1}}+x^{2\cdot 5^{i-1}}+x^{5^{i-1}}+1$ for $i\ge 1$. 
\end{example}

\begin{example}
Let $q$ be a prime power such that $\gcd(q-1, 25)=5$ and $q$ is primitive modulo $9$ (there exist $4\cdot \varphi(6)=8$ residues modulo $225=25\cdot 9$ with this property). We observe that Corollary~\ref{cor:app} applies to $P=3$ and $n=5^a$ with $a\ge 1$. We have that $d=\gcd(q-1, 5^{a})=5$ and $m=5^{a-1}$. If $\theta\in \F_q$ is any element of order $5$, for any $a, s\ge 1$, we have that
$$x^{5^{a}3^{s}}-1=\left(\prod_{t=0}^{a-1}\prod_{{1\leq u\leq 5}\atop{\gcd(u,5^t)=1}}(x^{5^{t}}-\theta^u) \right)\times \left(\prod_{i=1}^{s}\prod_{t=0}^{a-1}\prod_{{1\leq u\leq 5}\atop{\gcd(u, 5^t)=1}}\theta^{-u\cdot \varphi(3^{i})}\Phi_{3^i}(\theta^{u}x^{5^t})\right),$$
where $\Phi_{3^i}(x)=x^{2\cdot 3^{i-1}}+x^{3^{i-1}}+1$ for $i\ge 1$.
\end{example}

\begin{example}
Let $q$ be a prime power such that $\gcd(q-1, 225)=15$ and $q$ is primitive modulo $289=17^2$ (there exist $\varphi(15)\cdot \varphi(17\cdot 16)=1024$ residues modulo $225\cdot 289$ with this property). We observe that Corollary~\ref{cor:app} applies to $P=17$ and $n=3^a5^{b}$. For simplicity, we only consider the case $a, b\ge 1$ (the cases $a=0$ or $b=0$ are similarly treated). We have that $d=\gcd(q-1, 3^a5^b)=15$ and $m=3^{a-1}5^{b-1}$. If $\theta\in \F_q$ is any element of order $15$, for any $a, b, s\ge 1$, we have that

\begin{align*}x^{3^a5^b17^s}-1=&\left(\prod_{{0\le t_1\le a-1}\atop{0\le t_2\le b-1}}\prod_{{1\leq u\leq 15}\atop{\gcd(u, 3^{t_1}5^{t_2})=1}}(x^{3^{t_1}5^{t_2}}-\theta^u) \right)\times \\ {}&\left(\prod_{1\le i\le s}\prod_{{0\le t_1\le a-1}\atop{0\le t_2\le b-1}}\prod_{{1\leq u\leq 15}\atop{\gcd(u, 3^{t_1}5^{t_2})=1}}\theta^{-u\cdot \varphi(17^{i})}\Phi_{17^i}(\theta^{u}x^{3^{t_1}5^{t_2}})\right),\end{align*}
where $\Phi_{17^i}(x)=\sum_{j=0}^{16}x^{j\cdot 17^{i-1}}$ for $i\ge 1$.

\end{example}

\section{The general case}
In the previous section, we provided the factorization of $f(x^n)$ over $\F_q$ under special conditions on $f$ and $n$. In particular, we assumed that $\rad(n)$ divides $q-1$ and $q\equiv 1\pmod 4$ if $n$ is divisible by $8$. In this section, we show a natural theoretical procedure to extend such result, removing these conditions on $n$. In order to do that, the following definition is useful.
\begin{definition}
Let $n$ be a positive integer such that $\gcd(n,q)=1$ and set $S_n=\ord_{\rad(n)}q$. Let $s_n$ be the positive integer defined as follows
$$s_n:=\begin{cases} 
S_n & \text{if $q^{S_n}\not\equiv 3 \pmod 4$ or $n\not\equiv 0\pmod 8$,}\\
 2S_n&\text{otherwise.}
 \end{cases}
$$
\end{definition}

In particular, the previous section dealt with positive integers $n$ for which $s_n=1$. Throughout this section, we fix $f\in \F_q[x]$ an irreducible polynomial of degree $k$ and order $e$ and consider positive integers $n$ such that $\gcd(n, ek)=1$, $s_n>1$ and $\gcd(s_n, k)=1$. In addition, as a natural extension of the previous section, $d$ denotes the number $\gcd(n, q^{s_n}-1)$ and $m=\frac nd= \frac n{\gcd( n, q^{s_n}-1)}$. 

We observe that, since $\gcd(s_n, k)=1$, the polynomial $f$ is also irreducible over $\mathbb F_{q^{s_n}}$. Let $\alpha$ be any root of $f$. From Theorem~\ref{pri1},  the irreducible factors of $f(x^n)$ in $\mathbb F_{q^{s_n}}[x]$ are the polynomials 
\begin{equation}\label{G_tu}
G_{t,u}(x):=\displaystyle\prod_{i=0}^{k-1} (x^t-\theta^{-u}\alpha^{trq^{is_n}}), 
\end{equation}
where 
\begin{itemize}
\item $r$ is a positive integer such that $rn\equiv 1\pmod e$;
\item $t$ is a divisor of $m$; 
\item $\theta\in \mathbb F_{q^{s_n}}^*$ is any element of order  $d$;
\item $\gcd(t,u)=1$, $1\le u\le d$. 
\end{itemize}

Now, for each polynomial $G_{t,u}$, we need to determine what is the smallest extension of $\F_q$ that contains its coefficients. This will provide the irreducible factor of $f(x^n)$ (over $\F_q$) associated to $G_{t, u}$.

\begin{definition}
For $t$ and $u$ as above, let $l_{t,u}$ be the least positive integer $v$ such that $G_{t,u}(x) \in  \mathbb F_{q^v}[x]$. 
\end{definition}

\begin{remark}
Since $G_{t,u}(x) \in  \mathbb F_{q^{s_n}}[x]$, we have that $l_{t,u}$ is a divisor of $s_n$.  Using the Frobenius automorphism, we conclude that  every irreducible factor of $f(x^n)$ in $\mathbb F_q[x]$ is of the form
$$\prod_{j=0}^{l_{t,u}-1} \sigma_q^j(G_{t,u}(x)).$$
\end{remark}

\begin{remark}\label{fatoresirredutiveis}
We observe that
$$\prod_{j=0}^{l_{t,u}-1} \sigma_q^j(G_{t,u}(x))=\prod_{j=0}^{l_{t,u}-1}\prod_{i=0}^{k-1} (x^t-\sigma_q^j(\theta^{-u}\alpha^{trq^{is_n}})),$$
is a polynomial of weight (i.e., the number of nonzero coefficients is) at most $$k\cdot l_{t, u}+1\le k\cdot s_n+1.$$ In particular, if $f(x)=x-1$, the weight of every irreducible factor of $x^n-1$ is  at most $s_n+1$; the cases $s_n=1$ and $s_n= 2$ are treated in~\cite{BGO}, where the irreducible factors are binomials and trinomials, respectively.
\end{remark}

The following lemma provides a way of obtaining the numbers $l_{t, u}$. In particular, we observe that they do not depend on $t$.

\begin{lemma}\label{l_tu}
The number $l_{t,u}$ is the least positive integer $v$ such that $\dfrac{\gcd(n, q^{s_n} -1)}{\gcd(n, q^{v}-1)}$ divides $u$.
\end{lemma}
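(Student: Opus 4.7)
The plan is to characterize when $G_{t,u}(x) \in \F_{q^v}[x]$ via the Frobenius $\sigma_{q^v}$, and then reduce to an arithmetic divisibility in $u$ and $v$. First, since $G_{t,u} \in \F_{q^{s_n}}[x]$, the set of $v$ for which $G_{t,u}\in \F_{q^v}[x]$ is closed under $\gcd$ and contains $s_n$, so $l_{t,u}$ divides $s_n$ and I may restrict attention to divisors $v$ of $s_n$. Next, $G_{t,u} \in \F_{q^v}[x]$ iff $\sigma_{q^v}$ fixes $G_{t,u}$. Applying $\sigma_{q^v}$ factorwise in \eqref{G_tu} sends $x^t - \theta^{-u}\alpha^{trq^{is_n}}$ to $x^t - \theta^{-uq^v}\alpha^{trq^{is_n+v}}$. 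Since the constant terms $\alpha^{trq^{is_n}}$ ($i=0,\dots,k-1$) are pairwise distinct (they are the $\F_{q^{s_n}}$-Galois conjugates of $\alpha^{tr}$, which is a root of the irreducible polynomial $g_t$ over $\F_{q^{s_n}}$ by Lemma~\ref{lepri1}(a)), fixing is equivalent to the existence of a permutation $\pi$ of $\{0,\dots,k-1\}$ with
\[\theta^{u(q^v-1)} = \alpha^{tr(q^{is_n+v}-q^{\pi(i)s_n})}\qquad (i=0,\dots,k-1).\]

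The core step is to separate the $\theta$- and $\alpha$-contributions. Since $\theta$ has order $d = \gcd(n, q^{s_n}-1)$, $\alpha$ has order $e$, and $\gcd(d,e)=1$ (because $d \mid n$ and $\gcd(n,e)=1$), the cyclic subgroups $\langle\theta\rangle$ and $\langle\alpha\rangle$ of $\overline{\F}_q^*$ intersect trivially, so both sides of the above identity must individually equal $1$. The $\theta$-side gives the $i$-independent condition $d \mid u(q^v-1)$. The $\alpha$-side, using $\gcd(tr,e)=1$ and $\ord_e q = k$, becomes $\pi(i)s_n \equiv is_n + v \pmod{k}$; here the hypothesis $\gcd(s_n, k)=1$ is essential, because it lets me define $\pi$ as the cyclic shift $i \mapsto i + v\cdot s_n^{-1} \pmod{k}$, a genuine permutation of $\{0,\dots,k-1\}$ for every $v$. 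Consequently the $\alpha$-equation imposes no restriction on $v$, and the only surviving condition is $d \mid u(q^v-1)$, equivalently $d/\gcd(d, q^v-1) \mid u$.

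To finish, I would rewrite this divisibility in the form stated. For $v \mid s_n$, $(q^v-1) \mid (q^{s_n}-1)$, so
\[\gcd(d, q^v-1) = \gcd(n, q^{s_n}-1, q^v-1) = \gcd(n, q^v-1),\]
and the condition becomes $\gcd(n, q^{s_n}-1)/\gcd(n, q^v-1) \mid u$; taking the least such $v$ yields $l_{t,u}$. The main technical obstacle is the clean separation of the $\theta$- and $\alpha$-parts, together with the observation that $\gcd(s_n,k)=1$ is precisely what renders the $\alpha$-side solvable for every $v$, so that it drops out of the final criterion.
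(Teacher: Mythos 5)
Your proof is correct and follows essentially the same route as the paper's: both reduce membership $G_{t,u}\in\F_{q^v}[x]$ to an identity of the form $\theta^{u(q^v-1)}=\alpha^{tr(q^{is_n+v}-q^{js_n})}$, use $\gcd(d,e)=1$ to force both sides to equal $1$, observe that the $\alpha$-condition is always solvable because $\gcd(s_n,k)=1$, and convert $d\mid u(q^v-1)$ into the stated divisibility via $\gcd(d,q^v-1)=\gcd(n,q^v-1)$ for $v\mid s_n$. Your treatment of the full permutation of factors (rather than a single factor) is a slightly more careful rendering of the same argument.
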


\begin{proof}
By definition,  $l_{t,u}$  is the least positive integer $v$ such that $G_{t,u}\in \mathbb{F}_{q^{v}}[x]$.  This last condition is equivalent to 
$$(\theta^{-u}\alpha^{-tr})^{q^{v}}=\theta^{-u}\alpha^{-trq^{is_{n}}}\,\,\,\, \text{for some integer  $i$}.$$
Therefore, $\theta^{-u(q^{v}-1)}=\alpha^{-tr(q^{is_{n}}-q^{v})}$. In particular, we have that $\ord (\theta^{-u(q^{v}-1)})=\ord(\alpha^{-tr(q^{is_{n}}-q^{v})})$. 
Since the orders of $\theta$ and $\alpha$ are relative prime, we conclude that 
$$\frac{d}{\gcd(d,u(q^{v}-1))}= \frac{e}{\gcd(e,tr(q^{is_{n}}-q^{v}))}=1.$$
In particular, $d=\gcd(n,q^{s_{n}}-1)$ divides $u(q^{v}-1)$. Since $v$ divides $s_n$, $q^{v}-1$ divides $q^{s_n}-1$ and so we have that $\gcd(\gcd(n,q^{s_{n}}-1), q^v-1)=\gcd(n,q^{v}-1)$. Therefore, $\frac{\gcd(n,q^{s_{n}}-1)}{\gcd(n,q^{v}-1)}$ divides $u$. Conversely, if $v$ is any positive integer such that $\frac{\gcd(n,q^{s_{n}}-1)}{\gcd(n,q^{v}-1)}$ divides $u$, we have that $d=\gcd(n, q^{s_n}-1)$ divides $u(q^v-1)$ and so $\theta^{-u(q^{v}-1)}=1$. In particular, for any positive integer $i$ such that $is_n\equiv v\pmod k$ (since $\gcd(k, s_n)=1$, such an integer exists), we have that 
$$\alpha^{q^{is_n}-q^v}=1=\theta^{-u(q^v-1)}.$$
From the previous observations, we conclude that $G_{t, u}\in \F_{q^v}[x]$.
\end{proof}

\begin{definition} For each divisor $s$ of $s_n$, set
$$\Lambda_t(s)=|\{G_{t,u}\in \F_{q^s}[x]|  \text{$G_{t,u}$ divides $f(x^n)$}\}|$$
and
$$\Omega_t(s)=|\{G_{t,u}\in \F_{q^s}[x]|  \text{$G_{t,u}$  divides $f(x^n)$ and $G_{t,u}\notin \F_{q^v}[x]$ for any $v<s$}\}|,$$
where the polynomials $G_{t,u}$ are given by the formula \eqref{G_tu}.
\end{definition}

We obtain the following result.

\begin{lemma} Let $t$ be a positive divisor of $m$ and $r_{n,t}$ be the smallest positive divisor of $s_n$ such that $\gcd \left(\dfrac{\gcd( n, q^{s_n}-1)}{\gcd( n, q^{r_{n,t}}-1)},t\right)= 1$.  For an arbitrary divisor $s$ of $s_n$, the following hold:
\begin{enumerate}[(a)]
\item if 
$r_{n,t}$ does not divide $s$, we have that $\Lambda_t(s)=\Omega_t(s)=0$;
\item If  
$r_{n,t}$ divides $s$, we have that 
$$\Lambda_t(s)=\frac {\varphi(t)}t\gcd (n, q^s-1)$$
and
$$\Omega_t(s)=\frac {\varphi(t)}t\sum_{r_{n,t}|v|s}\mu\left(\frac sv\right)\gcd (n, q^v-1),$$
where $\mu$ is the M\"obius function.
\end{enumerate}
\end{lemma}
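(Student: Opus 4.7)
The plan is to express $\Lambda_t(s)$ as an explicit count on $u$ via Lemma~\ref{l_tu}, and then derive $\Omega_t$ by M\"obius inversion. Define $D_v := \gcd(n,\, q^{s_n}-1)/\gcd(n,\, q^v-1)$ for each divisor $v$ of $s_n$. By Lemma~\ref{l_tu}, the condition $G_{t,u}\in \F_{q^v}[x]$ is equivalent to $D_v \mid u$, so
\[
\Lambda_t(s)=\#\{u : 1\le u\le d,\, \gcd(u,t)=1,\, D_s \mid u\}.
\]
Substituting $u=D_s w$ with $1\le w\le d/D_s = \gcd(n,q^s-1)$, the coprimality condition $\gcd(u,t)=1$ decomposes into the simultaneous requirements $\gcd(D_s,t)=1$ and $\gcd(w,t)=1$.

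The key preliminary step is to prove that, for $s\mid s_n$, $\gcd(D_s,t)=1 \Longleftrightarrow r_{n,t}\mid s$. The direction ``$\Longleftarrow$'' follows from monotonicity: $r_{n,t}\mid s$ forces $D_s\mid D_{r_{n,t}}$, whence $\gcd(D_s,t)\mid \gcd(D_{r_{n,t}},t)=1$. For ``$\Longrightarrow$'', I will invoke the classical identity $\gcd(q^a-1,q^b-1)=q^{\gcd(a,b)}-1$, which yields, at each prime $p$, the formula $\nu_p(D_{\gcd(a,b)})=\max(\nu_p(D_a),\nu_p(D_b))$. Taking $a=s$, $b=r_{n,t}$, this gives $\gcd(D_{\gcd(s,r_{n,t})},t)=1$, so minimality of $r_{n,t}$ forces $\gcd(s,r_{n,t})=r_{n,t}$, i.e., $r_{n,t}\mid s$. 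This immediately establishes part (a): if $r_{n,t}\nmid s$ then $\gcd(D_s,t)\neq 1$ and no $u$ survives, so $\Lambda_t(s)=\Omega_t(s)=0$.

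For the $\Lambda_t$ formula in part (b), suppose $r_{n,t}\mid s$ and set $N:=\gcd(n,q^s-1)$. It remains to show that the number of $w\in[1,N]$ coprime to $t$ equals $\varphi(t)N/t$, for which it suffices to verify $\rad(t)\mid N$. Each prime $p\mid t$ divides $n$ (as $t\mid m\mid n$) and also divides $q^{s_n}-1$ (by definition of $s_n$, which is a multiple of $\ord_{\rad(n)}q$), so $p\mid d$; the identity $\nu_p(D_s)=0$ then forces $\nu_p(N)=\nu_p(d)\ge 1$, whence $p\mid N$. This completes part (b) for $\Lambda_t$.

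Finally, the $\max$-formula for $\nu_p(D_{\gcd(a,b)})$ also shows that the set $\{v\mid s_n : G_{t,u}\in \F_{q^v}[x]\}$ is closed under $\gcd$ inside $\mathrm{Div}(s_n)$, so it possesses a unique divisibility-minimum, which must coincide with $l_{t,u}$; hence $G_{t,u}\in \F_{q^v}[x]$ iff $l_{t,u}\mid v$, giving the clean refinement $\Lambda_t(s)=\sum_{v\mid s}\Omega_t(v)$. M\"obius inversion, combined with the vanishing of $\Lambda_t(v)$ for $r_{n,t}\nmid v$, then yields the stated formula for $\Omega_t(s)$. I expect the main obstacle to be the careful $p$-adic bookkeeping that establishes the identity $\nu_p(D_{\gcd(a,b)})=\max(\nu_p(D_a),\nu_p(D_b))$ and therefore the equivalence $\gcd(D_s,t)=1\Longleftrightarrow r_{n,t}\mid s$; once those are in hand, the remainder of the argument is formal.
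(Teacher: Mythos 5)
Your proof is correct, and its skeleton --- characterize membership in $\F_{q^s}[x]$ via Lemma~\ref{l_tu}, count the admissible $u$, then apply M\"obius inversion --- is the same as the paper's. The genuine divergence is in how the equivalence $\gcd(D_s,t)=1\Longleftrightarrow r_{n,t}\mid s$ (hence part (a)) is established: the paper argues by contradiction, writing $s'=\gcd(r_{n,t},s)=ar_{n,t}-bs$ and using the one-sided inequality $\nu_p(q^{s'}-1)\ge\min\{\nu_p(q^{r_{n,t}}-1),\nu_p(q^{s}-1)\}$ to contradict the minimality of $r_{n,t}$, whereas you invoke the exact identity $\gcd(q^a-1,q^b-1)=q^{\gcd(a,b)}-1$, which gives $\nu_p(D_{\gcd(a,b)})=\max(\nu_p(D_a),\nu_p(D_b))$ and yields the implication directly, with no contradiction argument. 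The two routes are of comparable depth, but yours pays a second dividend: the same max-formula shows that the set of divisors $v$ of $s_n$ with $G_{t,u}\in\F_{q^v}[x]$ is closed under $\gcd$, so that $G_{t,u}\in\F_{q^v}[x]$ if and only if $l_{t,u}\mid v$. This justifies the identity $\Lambda_t(s)=\sum_{v\mid s}\Omega_t(v)$, which the paper uses without comment even though its definition of $\Omega_t(s)$ excludes all $v<s$ rather than only the divisors of $s$; some such observation (or the standard fact $\F_{q^a}\cap\F_{q^b}=\F_{q^{\gcd(a,b)}}$) really is needed there, so your treatment closes a small gap. Your verification that $\rad(t)$ divides $\gcd(n,q^{s}-1)$, needed to evaluate the count as $\frac{\varphi(t)}{t}\gcd(n,q^s-1)$, matches the paper's.
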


\begin{proof}
From Lemma~\ref{l_tu}, we have that $G_{u,t}\in \mathbb{F}_{q^{s}}[x]$ if and only if  $\frac{\gcd(n, q^{s_{n}}-1)}{\gcd(n,q^{s}-1)}$ divides $u$. Since $\gcd(u,t)=1$, if $G_{u,t}(x)$ is in $\mathbb{F}_{q^{s}}[x]$, it follows that $\gcd\left(\frac{\gcd(n, q^{s_{n}}-1)}{\gcd(n,q^{s}-1)}, t\right)=1$. Suppose, by contradiction,  that $r_{n,t}$ does not divide $s$ and $\Lambda_t(s)\ne 0$. Therefore, we necessarily have that
$$\gcd\left(\frac{\gcd(n, q^{s_{n}}-1)}{\gcd(n,q^{s}-1)}, t\right)=\gcd\left(\frac{\gcd(n, q^{s_{n}}-1)}{\gcd(n,q^{r_{n,t}}-1)}, t\right)=1$$
However, for each prime divisor $p$ of $t$, we have that
$$\min\{\nu_p(n),\nu_p(q^{s_n}-1)\}=\min\{\nu_p(n),\nu_p(q^{r_{n,t}}-1)\}=\min\{\nu_p(n),\nu_p(q^{s}-1)\}\ge 1.$$
If $s'=\gcd(r_{n,t},s)$, then there exist positive integers $a$ and $b$ such that $s'=ar_{n,t}-bs$ and
\begin{align*}
\nu_p(q^{s'}-1)&=\nu_p(q^{ ar_{n,t}-bs}-1)=\nu_p(q^{ ar_{n,t}}-q^{bs})\\
&\ge\min\{\nu_p(q^{ ar_{n,t}}-1), \nu_p(q^{bs}-1)\}\\
&\ge \min\{\nu_p(q^{r_{n,t}}-1), \nu_p(q^{s}-1)\}.
\end{align*}
In particular, $\min\{\nu_p(n),\nu_p(q^{s_n}-1)\}=\min\{\nu_p(n),\nu_p(q^{s'}-1)\}$ for every prime divisor of $t$, a contradiction since $s'<r_{n,t}$.  Now, let $s$ be a positive divisor of $s_n$ such that $r_{n,t}|s$.  Every  factor  $G_{u,t}$ of $f(x^n)$  that is in $\mathbb{F}_{q^{s}}$ satisfies the conditions $\gcd(u,t)=1$, $1\le u\le \gcd(n, q^{s_n}-1)$  and $\frac{\gcd(n, q^{s_{n}}-1)}{\gcd(n,q^{s}-1)}$ divides $u$. Then 
$u= \frac{\gcd(n, q^{s_{n}}-1)}{\gcd(n,q^{s}-1)}u'$, with  $\gcd(u',t)=1$, $1\le u'\le \gcd(n, q^{s}-1)$. 

In addition, if $p$ is a prime divisor of $t$, we have that $p$ divides $m$ and then $\nu_p(n)>\nu_p(q^{s_n}-1)\ge 1$. However, $p\nmid \frac{\gcd(n,q^{s_n}-1)}{\gcd(n,q^{s}-1)}$ and so $p$ divides $q^s-1$.  It follows that $\rad(t)$ divides $q^s-1$. We conclude that the number of elements $u'$ satisfying the previous conditions equals
$$\Lambda_{t}(s)=\varphi(\rad(t))\cdot\frac{\gcd(n, q^{s}-1)}{\rad(t)} = \frac{\varphi(t)}t  \gcd(n, q^{s}-1).$$
Finally, we observe that $\Lambda_{t}(s)=\sum_{v|s}\Omega_{t}(v)$  and so the M\"obius inversion formula yields the following equality
$$\Omega_{t}(s)= \sum_{v|s}\mu\left(\frac{s}{v}\right)\Lambda_{t}(v)=\sum_{r_{n,t}|v|s}\mu\left(\frac{s}{v}\right)\frac{\varphi{(t)}}{t}\gcd(n, q^{v}-1).$$ 
\end{proof}

\begin{theorem} Let $f\in \F_q[x]$ be a irreducible polynomial of degree $k$ and order $e$. Let $n$ be a positive integer such that $\gcd(n,ek)=1$ and $\gcd(k, s_n)=1$. 
The number of irreducible factors of $f(x^n)$ in $\F_q[x]$ is
$$\frac 1{s_n} \sum_{t|m}  \frac {\varphi(t)}{t} \sum_{r_{n,t}| v|s_n} \gcd(n, q^v-1) \varphi\left(\frac {s_n}{v}\right),$$
 or equivalently
$$\frac 1{s_n} \sum_{v|s_n} \gcd(n, q^v-1) \varphi\left(\frac {s_n}{v}\right) \prod_{t|m_v} \left(1+\nu_p(m_v) \frac {p-1}p\right).$$
where $m_v=\max\left\{t\left| \ \text{$t$ divides $m$  and $\gcd\left(\frac{\gcd(n, q^{s_n}-1)}{\gcd(n, q^v-1)}, t\right)=1$}\right.\right\}$.


\end{theorem}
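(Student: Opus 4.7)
The plan is to count $\F_q$-irreducible factors of $f(x^n)$ via $\sigma_q$-orbits of the polynomials $G_{t,u}$ defined in~\eqref{G_tu}. Since $\gcd(s_n, k) = 1$, the polynomial $f$ remains irreducible over $\F_{q^{s_n}}$, and by construction of $s_n$ the pair $(f, n)$ satisfies the hypotheses of Theorem~\ref{pri1} over $\F_{q^{s_n}}$. Hence in $\F_{q^{s_n}}[x]$ we have
$$
f(x^n) \;=\; \prod_{t \mid m}\; \prod_{\substack{1\le u\le d\\ \gcd(u,t)=1}} G_{t,u}(x).
$$
By the remark following the definition of $l_{t,u}$, every $\F_q$-irreducible factor of $f(x^n)$ is of the form $\prod_{j=0}^{l_{t,u}-1}\sigma_q^{j}(G_{t,u})$; that is, it is indexed by a $\sigma_q$-orbit on the set of $G_{t,u}$'s, and the orbit of $G_{t,u}$ has exactly $l_{t,u}$ elements by minimality of $l_{t,u}$. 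Together with the preceding lemma, the total count of $\F_q$-irreducible factors of $f(x^n)$ equals
$$
\sum_{t\mid m}\; \sum_{v\mid s_n} \frac{\Omega_t(v)}{v}
\;=\; \sum_{t\mid m}\frac{\varphi(t)}{t}\sum_{\substack{v\mid s_n\\ r_{n,t}\mid v}}\frac{1}{v}\sum_{\substack{w\mid v\\ r_{n,t}\mid w}} \mu\!\bigl(v/w\bigr)\,\gcd(n,q^w-1).
$$

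Next I would swap the inner sums in $v$ and $w$. For fixed $w$ with $r_{n,t}\mid w\mid s_n$, substituting $v = wj$ with $j\mid s_n/w$ gives
$$
\sum_{\substack{v\mid s_n\\ w\mid v}}\frac{\mu(v/w)}{v}
\;=\; \frac{1}{w}\sum_{j\mid s_n/w}\frac{\mu(j)}{j}
\;=\; \frac{\varphi(s_n/w)}{s_n},
$$
using the classical identity $\sum_{j\mid N}\mu(j)/j = \varphi(N)/N$. Relabeling $w$ as $v$ yields the first claimed expression
$$
\frac{1}{s_n}\sum_{t\mid m}\frac{\varphi(t)}{t}\sum_{\substack{v\mid s_n\\ r_{n,t}\mid v}}\gcd(n,q^v-1)\,\varphi(s_n/v).
$$

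For the equivalent second form, I would then swap the outer sums in $t$ and $v$. By Lemma~\ref{l_tu} and the definition of $r_{n,t}$, the condition $r_{n,t}\mid v$ is equivalent to $\gcd\!\bigl(\gcd(n,q^{s_n}-1)/\gcd(n,q^v-1),\, t\bigr)=1$, which in turn is equivalent to $t\mid m_v$. Therefore the inner sum reduces to $\sum_{t\mid m_v}\varphi(t)/t$; combining this with the multiplicativity of $N\mapsto \sum_{t\mid N}\varphi(t)/t$ and the prime-power evaluation $\sum_{i=0}^a \varphi(p^i)/p^i = 1 + a(p-1)/p$ delivers the stated product formula. The most delicate point is the orbit-size claim: $\sigma_q$ must permute the set of factors $G_{t,\cdot}$ for each fixed $t$, which holds because $\sigma_q(G_{t,u})$ is again irreducible over $\F_{q^{s_n}}$, divides $f(x^n)$, has degree $tk$, and is a polynomial in $x^t$, so by uniqueness of the factorization in Theorem~\ref{pri1} it must coincide with some $G_{t,u'}$.
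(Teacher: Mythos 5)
Your proposal is correct and follows essentially the same route as the paper: count the factors $G_{t,u}$ via $\sum_{t\mid m}\sum_{v\mid s_n}\Omega_t(v)/v$ and then rearrange the M\"obius sum (the paper reindexes through $s=r_{n,t}s'$, you swap the order of summation and invoke $\sum_{j\mid N}\mu(j)/j=\varphi(N)/N$, which is the same computation). The only addition is that you also sketch the passage to the second ``equivalent'' form via the identification $r_{n,t}\mid v \iff t\mid m_v$, a step the paper asserts without proof.
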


\begin{proof}
The number of irreducible factors of $f(x^n)$ in $\F_q[x]$ is
\begin{align*}
\sum_{t|m}\frac {\varphi(t)}{t} \sum_{s|s_n} \frac 1s \Omega_t(s)&=\sum_{t|m}\frac{\varphi(t)}{t} \sum_{r_{n,t}|s|s_{n}}\frac{1}{s}\sum_ {v'|\frac{s}{r_{n,t}}}\mu\left( \frac{s}{r_{n,t}v'}\right) \gcd(n, q^{v'r_{n,t}}-1)\\ &= \sum_{t|m}\frac{\varphi(t)}{t} \sum_{s'|\frac{s_{n}}{r_{n,t}}}\frac{1}{r_{n,t}s'}\sum_ {v'|s'}\mu\left( \frac{s'}{v'}\right) \gcd(n, q^{v'r_{n,t}}-1)\\ &= \sum_{t|m}\frac{\varphi(t)}{t} \sum_{v'|\frac{s_{n}}{r_{n,t}}}\dfrac{\gcd(n, q^{v'r_{n,t}}-1)}{r_{n,t}}\sum_ {v'|s'|\frac{s_{n}}{r_{n,t}}}\dfrac{\mu\left(\frac{s'}{v'}\right)}{{s'}}\\ &=\sum_{t|m}\frac{\varphi(t)}{t} \sum_{v'|\frac{s_{n}}{r_{n,t}}}\dfrac{\gcd(n, q^{v'r_{n,t}}-1)}{v'r_{n,t}}\sum_ {s''|\frac{s_{n}}{v'r_{n,t}}}\frac{\mu(s'')}{{s''}}\\ 
&= \sum_{t|m}\frac{\varphi(t)}{t}  \sum_{r_{n,t}|v|s_{n}}\dfrac{\gcd(n, q^{v}-1)}{v}\sum_ {s|\frac{s_{n}}{v}}\frac{\mu(s)}{s}\\
&= \frac 1{s_n}\sum_{t|m}\frac{\varphi(t)}{t}  \sum_{r_{n,t}|v|s_{n}}\gcd(n, q^{v}-1)\varphi\left(\frac{s_{n}}{v}\right).
\end{align*}
\end{proof}

\section{Factors of $f(x^n)$ when  $s_n$ is a prime number}
 In this section, as an application of previous results,  we consider some cases when $s_n$ is a prime number.   Throughout this section, for each $\gcd(q,d)=1$,   $\sim_{d}$ denotes  the equivalent relation defined as follows: $a\sim_{d} b$ if there exists $j\in \N$ such that $a\equiv bq^j\pmod d$.

\subsection{The case $\rad(n)|(q-1)$ and $s_n\ne 1$ }
This case is the complementary of Theorem~\ref{pri1}, where $q\equiv 3\pmod 4$ and $8|n$ but keeping the condition $\rad(n)|(q-1)$. 

\begin{theorem}\label{pri2}
Let $n$ be an integer and $q$ be power of a prime such that $8|n$ and $q\equiv 3 \pmod 4$. Let  $\theta$ be an element  in $\mathbb F_{q^{2}}^{\ast}$ with order  $d=\gcd (n,q^{2}-1)$ and $f(x)$ be an irreducible polynomial of degree $k$ odd and order $e$. In addition, let  
$g_t(x)$ be as in Lemma \ref{lepri1} .
 Then  $d=2^l\gcd(n,q-1)$, where $l=\min\lbrace\nu_{2}(\frac{n}{2}),\,\nu_{2}(q+1)\rbrace$ and 
 $f(x^{n})$ splits into irreducible factors  in $\mathbb F_q[x]$ as
\begin{equation}\label{fatores2}
f(x^{n})=\displaystyle\prod_{{t|m}\atop{t\text{ odd}}}\displaystyle\prod_{{1\leqslant w\leqslant \gcd(n,q-1)}\atop{\gcd (w,\,t)=1}}\beta^{-wk}g_{t}(\beta^{w}x^{t})\displaystyle\prod_{t|m}\displaystyle\prod_{u\in R_{t}}[\theta^{-uk(q+1)}g_{t}(\theta^{u}x^{t})g_{t}(\theta^{uq}x^{t})],
\end{equation}
where $m=\frac nd =\frac n{\gcd(n,q^2-1)}$, $\beta=\theta^{2^l}$ is  a $\gcd(n,q-1)$-th primitive root of the unity and
$\mathcal R_{t}$ is the set of $q$-cyclotomic classes
$$\mathcal R_{t}=\left\lbrace u\in\mathbb N| 1\le u\le d, \, \gcd(u,t)=1,\,2^l\nmid u
\right\rbrace /\sim_{d},
$$
In addition, the total number of irreducible factors of $f(x^{n})$ in $\mathbb F_q[x]$ is:
$$\gcd (n,\,q-1)\left(\frac 12+2^{l-2}(2+\nu_2(m))\right)\prod_{{p|m}\atop{p\text{ prime}}}\left(1+\nu_{p}(m)\dfrac{p-1}{p}\right).$$ 
\end{theorem}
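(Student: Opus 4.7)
The plan is to reduce to Theorem~\ref{pri1} by passing to the quadratic extension $\mathbb F_{q^2}$ (which coincides with $\mathbb F_{q^{s_n}}$, as $q\equiv 3\pmod 4$ and $8\mid n$ force $s_n=2$), apply that theorem there, and then descend back to $\mathbb F_q$ by collecting the resulting irreducible factors into $\sigma_q$-orbits. As a preliminary step, one verifies $d=2^{l}\gcd(n,q-1)$: Lemma~\ref{LEL} gives $\nu_2(q^2-1)=1+\nu_2(q+1)$ (since $q\equiv 3\pmod 4$), and for any odd prime $p\mid n$ the hypothesis $p\mid q-1$ gives $\nu_p(q^2-1)=\nu_p(q-1)$; comparing valuations prime by prime in $d$ and $\gcd(n,q-1)$, using $\nu_2(\gcd(n,q-1))=1$, yields the identity, and consequently $\beta=\theta^{2^l}$ has order exactly $\gcd(n,q-1)$.

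Next, $f$ remains irreducible over $\mathbb F_{q^2}$ because $\gcd(k,2)=1$, and the remaining hypotheses of Theorem~\ref{pri1}, namely $\rad(n)\mid q^2-1$, $\gcd(n,ek)=1$, and $q^2\equiv 1\pmod 4$, all hold automatically. Theorem~\ref{pri1} thus produces
$$f(x^n)=\prod_{t\mid m}\prod_{\substack{1\le u\le d\\ \gcd(u,t)=1}}\theta^{-uk}\,g_t(\theta^u x^t)\quad\text{in }\mathbb F_{q^2}[x],$$
with each factor monic and $\mathbb F_{q^2}$-irreducible of degree $kt$.

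The heart of the proof is the Galois descent. Since $g_t\in\mathbb F_q[x]$, we have $\sigma_q\bigl(g_t(\theta^u x^t)\bigr)=g_t(\theta^{uq}x^t)$, and by Lemma~\ref{l_tu} the factor already lies in $\mathbb F_q[x]$ iff $2^l\mid u$; otherwise its $\sigma_q$-orbit has size exactly $2$ (because $q^2\equiv 1\pmod d$). In the first case, writing $u=2^l w$ with $1\le w\le\gcd(n,q-1)$ produces the factor $\beta^{-wk}g_t(\beta^w x^t)$, and the constraint $\gcd(u,t)=1$ together with $l\ge 1$ (forced by $8\mid n$ and $q\equiv 3\pmod 4$) restricts $t$ to odd divisors of $m$; this gives the first product in Equation~\eqref{fatores2}. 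In the second case, one pairs $g_t(\theta^u x^t)$ with its conjugate $g_t(\theta^{uq}x^t)$ to form a $\sigma_q$-invariant polynomial in $\mathbb F_q[x]$ of degree $2kt$; this combined polynomial is $\mathbb F_q$-irreducible because its only $\mathbb F_{q^2}$-irreducible factors constitute a single Galois orbit. Picking one representative $u$ per $\sim_d$-equivalence class supplies the set $\mathcal R_t$ and the second product.

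For the count, for each odd $t\mid m$ one obtains $\tfrac{\varphi(t)\gcd(n,q-1)}{t}$ factors of the first kind and $\tfrac{(2^l-1)\varphi(t)\gcd(n,q-1)}{2t}$ of the second, totaling $\tfrac{(2^l+1)\varphi(t)\gcd(n,q-1)}{2t}$; for each even $t\mid m$ one obtains $\tfrac{2^{l-1}\varphi(t)\gcd(n,q-1)}{t}$ factors. Summing over $t\mid m$ via the multiplicativity of $\varphi(t)/t$ (which yields $\sum_{t\mid m,\,t\text{ odd}}\varphi(t)/t=\prod_{p\mid m,\,p\text{ odd}}\bigl(1+\nu_p(m)\tfrac{p-1}{p}\bigr)$ and $\sum_{t\mid m,\,t\text{ even}}\varphi(t)/t=\tfrac{\nu_2(m)}{2}\sum_{t\mid m,\,t\text{ odd}}\varphi(t)/t$) and simplifying gives the stated closed form. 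The main obstacle is the combinatorial bookkeeping in distinguishing the two orbit sizes, the resulting odd/even-$t$ dichotomy, and the $\varphi(t)/t$-sum simplification; the irreducibility of the combined factor and the orbit structure itself are then formal consequences of Galois theory together with Lemma~\ref{l_tu}.
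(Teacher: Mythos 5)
Your proposal is correct and follows essentially the same route as the paper: base-change to $\mathbb{F}_{q^2}$ (where $s_n=2$), apply Theorem~\ref{pri1} there, and descend to $\mathbb{F}_q$ by grouping the factors into Frobenius orbits, with Lemma~\ref{l_tu} deciding that a factor is rational over $\mathbb{F}_q$ exactly when $2^l\mid u$. The only divergence is in the bookkeeping: you count orbit sizes directly for odd and even $t$, while the paper feeds $r_{n,t}\in\{1,2\}$ into the general $\Omega_t$/M\"obius formula of the preceding section; your direct count lands on the same expression as the paper's own computation, namely with the product $\prod_p\bigl(1+\nu_p(m)\tfrac{p-1}{p}\bigr)$ taken over \emph{odd} primes $p\mid m$ only, which agrees with the end of the paper's proof (the statement's displayed product over all $p\mid m$ appears to be a typo).
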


\begin{proof}
Since $q\equiv 3\pmod 4$ and $8|n$, we have that $s_n=2$, 

$$d=\gcd (n,\,q^{2}-1)=2\gcd \left( \dfrac{n}{2},\,\dfrac{q-1}{2}(q+1)\right) =2\gcd \left( \dfrac{n}{2},\,\dfrac{q-1}{2}\right)\gcd \left( \dfrac{n}{2},\,q+1\right),$$
where 
$$\gcd \left( \dfrac{n}{2},\,q+1\right)=2^{\min\lbrace\nu_{2}(\frac{n}{2}),\,\nu_{2}(q+1)\rbrace}=2^l.$$

It follows from Lemma \ref{l_tu} that $G_{t u}(x)=\theta^{-uk}g_{t}(\theta^{u}x^{t})\notin \F_q[x]$ if $\dfrac{\gcd(n, q^2-1)}{\gcd(n,q-1)}=2^l$ does not divide $u$, i.e.  the class of $u$  is in $\mathcal R_t$. Then the factorization in  Eq.~\eqref{fatores2} follows from Remark  \ref{fatoresirredutiveis}. Since $r_{n,t}$ is a divisor of $s_n=2$, in order to determine how many factors of each degree we need to consider two cases: $r_{n,t}=1$ or $r_{n, t}=2$. If $r_{n, t}=1$, we have that $t$ satifies the following equalities 
$$\gcd\left(\frac {\gcd(n, q^2-1)}{\gcd(n,q-1)}, t\right)=\gcd(2^l, t)=1,$$
and so $t$ must be odd. If $r_{n,t}=2$, since $r_{n,t}$ is minimal with this property, it follows that $t$ must be even.  In conclusion, the number of irreducible factor of $f(x^n)$ in $\F_q[x]$ equals

\begin{align*}
&\frac 12 \sum_{v|2} \gcd(n, q^v-1) \varphi\left(\frac {2}{v}\right) \prod_{t|m_v} \left(1+\nu_p(m_v) \frac {p-1}p\right)\\
&=\frac  12 \gcd(n, q-1)\prod_{p|m_1}  \left(1+\nu_p(m) \frac {p-1}p\right)+ \frac  12  \gcd(n, q^2-1)\prod_{p|m_2}  \left(1+\nu_p(m) \frac {p-1}p\right)\\
&=\frac  12 \gcd(n, q-1)\prod_{p|m\atop p\ne 2}  \left(1+\nu_p(m) \frac {p-1}p\right)+ \frac  12  \gcd(n, q^2-1)\prod_{p|m}  \left(1+\nu_p(m) \frac {p-1}p\right)\\
&=\frac  12 \gcd(n, q-1) \left( 1+ 2^l\left(1+\frac 12\nu_2(m)\right)\right)\prod_{p|m\atop p\ne 2}  \left(1+\nu_p(m) \frac {p-1}p\right) .
\end{align*}
\end{proof}

\subsection{The case $\rad(n)| (q^p-1)$ with $p$ an odd prime 
and $\rad(n)\nmid (q-1)$}

Since the case $\rad(n)|(q-1)$ is now completely described, we only consider the case that $n$ has at least one prime factor that does not divide $q-1$.

\begin{theorem}\label{teo5}  Let $n$ be a positive integer such that $\ord_{\rad(n)}q=p$ is an odd prime and let $f\in \mathbb F_{q}[x]$ be an irreducible polynomial of degree $k$ and order $e$ such that  $\gcd (ke, n)=\gcd(k, p)=1$. In addition, suppose that $q\equiv 1\pmod4$ if $8|n$. 
Let $\theta$ be an element in $\mathbb{F}_{q^{p}}^{\ast}$ with order $d=\gcd (n,q^{p}-1)$, $m=\frac nd=\frac{n}{\gcd (n,q^{p}-1)}$ 
and $g_{t}(x)$ be as in Lemma \ref{lepri1}. The polynomial $f(x^{n})$ splits into irreducible factors in $\mathbb{F}_{q}[x]$ as
$$\prod_{{t|m}\atop{\rad(t)|(q-1)}}\prod_{{1\leqslant v \leqslant d'}\atop{\gcd (v,\,t)=1}}\beta^{-vk}\,g_{t}(\beta^{v}x^{t})
\prod_{{t|m}\atop{\rad(t)\nmid(q-1)}}\prod_{u\in R_{t}}\left[\theta^{-uk(1+\cdots +q^{p-1})}g_{t}(\theta^{u}x^{t})\cdots 
g_{t}(\theta^{uq^{p-1}}x^{t})\right],$$
where 
\begin{enumerate}[1)]
\item $d'=\gcd(n,\,q-1)$,  $\beta=\theta^{\gcd(n, \frac{q^p-1}{q-1})}$ is a $d'$ primitive root of the unity  and
 $$R_{t}=\left\lbrace u\in \mathbb{N}\left|  1\leq u\leq d, \quad \gcd\left(n,\frac{q^{p}-1}{q-1}\right)\nmid u, \quad \gcd (u,\,t)=1\right.\right\rbrace/\sim_d$$ 
 in the case that  $p\nmid n$ or $p\nmid (q-1)$ or $\nu_{p}(n)>\nu_{p}(q-1)\geq 1$.

\item $d'=p\cdot \gcd(\frac np, q-1)$, $\beta=\theta^{\gcd\left( \frac{n}{p},\,\frac{1}{p}\frac{q^{p}-1}{q-1}\right)}$ is a $d'$ primitive root of the unity  and
$$R_{t}=\left\lbrace  u\in \mathbb{N}\left| 1\leq u\leq d,\quad \gcd\left( \frac{n}{p},\,\frac{1}{p}\frac{q^{p}-1}{q-1}\right)\nmid u,\quad \gcd(u,t)=1\right. \right\rbrace/\sim_d$$
in the case that $p$ divides $n$ and $(q-1)$, and $\nu_{p}(n)\leq \nu_{p}(q-1)$.
\end{enumerate}
\end{theorem}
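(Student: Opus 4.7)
The plan is to specialize the general machinery of Section~3 to the situation $s_n = p$. First I would verify that $s_n = p$: by definition $S_n = \ord_{\rad(n)}q = p$, and $s_n \ne S_n$ requires $q^p \equiv 3\pmod 4$ together with $8 \mid n$; since $p$ is odd, $q^p \equiv q\pmod 4$, so the standing hypothesis $q \equiv 1\pmod 4$ when $8\mid n$ rules this out. Because $\gcd(k, s_n) = \gcd(k,p) = 1$, the polynomial $f$ remains irreducible over $\F_{q^p}$, and the hypotheses of Theorem~\ref{pri1} hold for $f$ and $n$ over $\F_{q^p}$. This yields the factorization of $f(x^n)$ into the $\F_{q^p}$-irreducible polynomials $G_{t,u}(x) = \theta^{-uk} g_t(\theta^u x^t)$ with $t\mid m$, $1\le u\le d$ and $\gcd(u,t)=1$.

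Next, by Lemma~\ref{l_tu}, since $s_n = p$ is prime each $l_{t,u}\in \{1,p\}$, and $l_{t,u}=1$ exactly when $D\mid u$, where $D := \gcd(n,q^p-1)/\gcd(n,q-1)$. The technical heart of the proof is to compute $D$ explicitly via Lifting the Exponent (Lemma~\ref{LEL}). For any prime $\ell \ne p$ with $\ell\mid q-1$, LTE gives $\nu_\ell(q^p-1) = \nu_\ell(q-1)$, so the $\ell$-part of $D$ coincides with that of $\gcd(n,(q^p-1)/(q-1))$. The delicate prime is $\ell = p$ when both $p\mid n$ and $p\mid q-1$: LTE gives $\nu_p(q^p-1) = \nu_p(q-1)+1$, so $\nu_p((q^p-1)/(q-1)) = 1$. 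A case split on whether $\nu_p(n) > \nu_p(q-1)$ or $\nu_p(n) \le \nu_p(q-1)$ produces $D = \gcd(n,(q^p-1)/(q-1))$ in case~1 and $D = \gcd(n/p,\frac{1}{p}(q^p-1)/(q-1))$ in case~2, matching the two expressions for $\beta$ in the theorem.

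For factors with $l_{t,u}=1$, I would substitute $u=Dv$ with $1\le v\le d' := d/D = \gcd(n,q-1)$ and set $\beta := \theta^D$. Since $\beta$ has order $d'$ dividing $q-1$, it lies in $\F_q^*$, and $G_{t,Dv} = \beta^{-vk}g_t(\beta^v x^t)\in \F_q[x]$. The constraint $\gcd(Dv,t)=1$ reduces to $\gcd(D,t)=1$ together with $\gcd(v,t)=1$, and the first condition corresponds to $\rad(t)\mid q-1$ in the scenarios under consideration; this yields the first product. For factors with $l_{t,u} = p$, the Frobenius orbit $\{u,uq,\dots,uq^{p-1}\}$ modulo $d$ produces the $\F_q$-irreducible factor $\prod_{j=0}^{p-1} G_{t,uq^j}(x) = \theta^{-uk(1+q+\cdots+q^{p-1})}\prod_{j=0}^{p-1} g_t(\theta^{uq^j}x^t)$, and $R_t$ parametrizes representatives of these orbits under $\sim_d$, producing the second product.

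The main obstacle will be the $p$-adic bookkeeping: correctly tracking $\nu_p(n)$, $\nu_p(q-1)$, and $\nu_p((q^p-1)/(q-1))=1$ through the case split, and verifying that in case~2 the expression $\gcd(n/p,\frac{1}{p}(q^p-1)/(q-1))$ indeed gives $\beta = \theta^D$ of order exactly $d' = \gcd(n,q-1)$. A secondary point is to verify that the orbits parametrized by $R_t$ have full size $p$, so that no double counting occurs; this follows from the minimality of $l_{t,u}$ in Lemma~\ref{l_tu} combined with the characterization $l_{t,u}=1\iff D\mid u$.
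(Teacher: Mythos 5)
Your proposal is correct and follows essentially the same route as the paper: apply Theorem~\ref{pri1} over $\F_{q^p}$ (justified by $\gcd(k,p)=1$ and $q^p\equiv 1\pmod 4$ when $8\mid n$), use Lemma~\ref{l_tu} to characterize which factors $G_{t,u}$ already lie in $\F_q[x]$ via divisibility of $u$ by $d/\gcd(n,q-1)$, compute that quotient by the same gcd/LTE case split on $p\mid n$, $p\mid q-1$ and $\nu_p(n)$ versus $\nu_p(q-1)$, and assemble the remaining factors into Frobenius orbits of length $p$. The paper phrases the descent criterion as $d\mid u(q-1)$ and expands $\gcd(n,q^p-1)$ directly rather than quoting Lemma~\ref{l_tu}, but this is the same computation.
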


\begin{proof}
Since $\gcd(k,p)=1$, $f(x)$ is also irreducible in $\mathbb F_{q^p}[x]$. In addition,
$q^{p}\equiv 1\pmod 4$ if $8|n$ and so Theorem \ref{pri1} entails that   $f(x^{n})$ splits into irreducible factors in $\mathbb{F}_{q^{p}}[x]$ as
$$f(x^{n})=\displaystyle\prod_{t|m}\displaystyle\prod_{{1\leqslant u\leqslant d}\atop{\gcd(u,\,t)=1}}\theta^{-uk}\,g_{t}(\theta^{u}x^{t}),$$
where $\theta$ is an element in $\mathbb{F}_{q^{p}}^{\ast}$ with order $d=\gcd (n,q^{p}-1)$.
Following the same inicial step of the proof of  Theorem \ref{pri2}, we observe that  $\theta^{-uk}\,g_{t}(\theta^{u}x^{t})\in \mathbb F_{q}[x]$ if and only if $d|u(q-1)$. At this point, we have two cases to consider:
\begin{enumerate}[1.]
\item $p\nmid n$ or $p\nmid (q-1)$:  
in this case, we have that
$$\gcd(n,\,q^{p}-1)=\gcd\left( n,\,\frac{q^{p}-1}{q-1} (q-1)\right) =\gcd\left( n,\,\frac{q^{p}-1}{q-1}\right). \gcd(n,q-1).$$ Hence
$\gcd(n,\,q^{p}-1)|u(q-1)$ if and only if $\gcd\left( n,\,\frac{q^{p}-1}{q-1}\right)|u$.

\item $p|n$ and $p|(q-1)$: in this case, we have that

\begin{align*}\gcd(n,\,q^{p}-1)= &\gcd\left( n,\frac{q^{p}-1}{q-1} (q-1)\right)=p\cdot \gcd\left( \frac{n}{p},\frac{1}{p}\left( \frac{q^{p}-1}{q-1}\right) (q-1)\right)\\=&
p\cdot \gcd\left( \frac{n}{p},\frac{1}{p}\frac{q^{p}-1}{q-1}\right)\cdot \gcd\Bigl(\frac{n}{p},q-1\Bigr).\end{align*}

Therefore
$\gcd (n,\,q^{p}-1)|u(q-1)$ if and only if 
\begin{equation}\label{lays}
p \cdot\gcd \left( \frac{n}{p},\,\frac{1}{p}\frac{q^{p}-1}{q-1}\right)\cdot \gcd \left(\frac{n}{p},q-1\right)|{u(q-1)}.\end{equation}
We split into subcases:
\begin{enumerate}[2.1.]
\item If $\nu_{p}(n)\leq \nu_{p}(q-1)$, Eq.~\eqref{lays} is equivalent to 
$\gcd \left( \frac{n}{p},\frac{1}{p}\frac{q^{p}-1}{q-1}\right)|u$.
\item If $\nu_{p}(n)> \nu_{p}(q-1)$, Eq.~\eqref{lays} is equivalent to $\gcd \left(n,\frac{q^{p}-1}{q-1}\right)|u.$
\end{enumerate}
\end{enumerate}

We observe that in the cases 1 and 2.2,  the conclusion is the same. Therefore, in these cases,
$\theta^{-uk}g_{t}(\theta^{u}x^{t})\in \mathbb{F}_{q}[x]$ if and only if 
 $u=\gcd \left( n,\,\frac{q^{p}-1}{q-1}\right)\cdot v$ for some positive integer $v$. 
Since $1\leq u \leq \gcd (n,q^{p}-1)$, it follows that  
$1\leq v \leq \gcd (n,q-1)$.
In addition, since $\gcd (u,\,t)=1$, we have that $\gcd(v,t)=1$ and $\nu_{p'}(q^{p}-1)=\nu_{p'}(q-1)$ for every prime $p'|t$. Thus, in the cases 1 and 2.2, the numbers $v$ and $t$ satisfy  the conditions  $\gcd(v,t)=1$ and  $\rad(t)|(q-1)$. 

Now, in the case 2.1, $\theta^{-uk}g_{t}(\theta^{u}x^{t})\in \mathbb{F}_{q}[x]$
if and only if 
$u=\gcd\left( \frac{n}{p},\,\frac{1}{p}\frac{q^{p}-1}{q-1}\right)\cdot v$ for some integer $v$. Since $1\leq u \leq \gcd(n,q^{p}-1)$, then 
$1\leq v\leq p\gcd\left( \frac{n}{p},q-1\right) $. In the same way as before,  we obtain that $v$ and $t$ satisfy the  conditions  $\gcd(v,t)=1$ and  $\rad(t)|(q-1)$. 

In conclusion, we have that the irreducible factors of $f(x^n)$ in $\mathbb F_{q^p}[x]$, that are also irreducible in $\mathbb F_q[x]$  are the form
$$\theta^{-uk}g_{t}(\theta^{u}x^{t})= \beta^{-vk}g_t(\beta^v x^t),$$
where $\rad(t)|(q-1)$ and $\beta$ is an element of $\mathbb F_q^*$ of order $d'$.

Finally, if $\rad(t)\nmid (q-1)$, we have that $\theta^{-uk}g_{t}(\theta^{u}x^{t})\in\mathbb F_{q^p}[x]\setminus \mathbb{F}_{q}[x]$. 
In particular, since the polynomial $$\prod_{i=0}^{p-1}\theta^{-ukq^i}g_{t}(\theta^{uq^i}x^{t}),$$
 is invariant by the Frobenius automorphism and is divisible by $\theta^{-uk}g_{t}(\theta^{u}x^{t})$, such polynomial is monic irreducible over  $\mathbb F_q$.  
\end{proof}

The following result provide information on the number of irreducible factors of $f(x^n)$ under the conditions of Theorem~\ref{teo5}.

\begin{theorem} \label{teo6} Let $n, m, q$ and $f\in \F_q[x]$ be as in  Theorem \ref{teo5}. Then the  number of irreducible factors of $f(x^{n})$ in $\mathbb F_q[x]$ is

$$\frac{p-1}{p}\gcd(n,\,q-1)\displaystyle\prod_{{p'|m}\atop{p'\text {prime}}}\left(1+\nu_{p'}(m)\dfrac{p'-1}{p'}\right)+ \frac{\gcd(n,\,q^{p}-1)}{p}\displaystyle\prod_{{p'|m}\atop{p'\text{ prime}}}\left(1+\nu_{p'}(m)\dfrac{p'-1}{p'}\right).$$ 
\end{theorem}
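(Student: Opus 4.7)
The plan is to apply Theorem~\ref{teo5} directly and count the irreducible factors arising in the two families appearing in its factorization of $f(x^{n})$ over $\F_q$. Factors of the first family are indexed by pairs $(t,v)$ with $t\mid m$, $\rad(t)\mid(q-1)$, $1\le v\le d'$ and $\gcd(v,t)=1$, each contributing a single irreducible of degree $kt$. Factors of the second family are indexed by pairs $(t,u)$ with $t\mid m$, $\rad(t)\nmid (q-1)$ and $u\in R_{t}$, each contributing a single irreducible of degree $pkt$ formed from the $p$ conjugates $g_{t}(\theta^{uq^{i}}x^{t})$.

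For the first family, since $\rad(t)$ divides both $n$ and $q-1$ it divides $d'=\gcd(n,q-1)$, so the number of $v\in\{1,\dots,d'\}$ coprime to $t$ is $\frac{d'\varphi(t)}{t}$. For the second family I would first check that any prime $p'\mid t$ not dividing $q-1$ satisfies $\ord_{p'}q=p$ (because $p'\mid\rad(n)$ and $\ord_{\rad(n)}q=p$), whence $p'$ divides $A:=\gcd(n,(q^{p}-1)/(q-1))$. Consequently $\gcd(A,t)>1$, so no $u\in\{1,\dots,d\}$ coprime to $t$ is divisible by $A$, and the set defining $R_{t}$ has exactly $\frac{d\varphi(t)}{t}$ elements before quotienting. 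Lemma~\ref{l_tu} then gives $l_{t,u}>1$ for each admissible $u$; together with $l_{t,u}\mid p$ this forces $l_{t,u}=p$, so every $\sim_{d}$--orbit has full size $p$ and $|R_{t}|=\frac{d\varphi(t)}{pt}$.

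Adding the two contributions yields
\[
\sum_{{t\mid m}\atop{\rad(t)\mid(q-1)}}\frac{d'\varphi(t)}{t}\;+\;\sum_{{t\mid m}\atop{\rad(t)\nmid(q-1)}}\frac{d\varphi(t)}{pt},
\]
which I would simplify through the multiplicative identity $\sum_{t\mid m}\varphi(t)/t=\prod_{p'\mid m}(1+\nu_{p'}(m)(p'-1)/p')$ and its analogous sub-product restricted to primes $p'\mid m$ with $p'\mid q-1$; a short algebraic rearrangement then rewrites the total in the form $\frac{(p-1)\gcd(n,q-1)}{p}(\cdot)+\frac{\gcd(n,q^{p}-1)}{p}(\cdot)$ displayed in the theorem. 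The main obstacle will be the orbit-size analysis for $R_t$ -- confirming via Lemma~\ref{l_tu} that $l_{t,u}=p$ for every admissible $u$ -- while a secondary issue is verifying that cases~(1) and~(2) of Theorem~\ref{teo5}, in which the definitions of $d'$ and $A$ take different shapes, ultimately collapse to the same closed expression for the count.
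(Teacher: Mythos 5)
Your tally inherits a gap from reading the displayed factorization in Theorem~\ref{teo5} literally, and as a result it omits an entire family of irreducible factors. Fix a divisor $t$ of $m$ with $\rad(t)\mid(q-1)$ and set $A=\gcd\bigl(n,\tfrac{q^p-1}{q-1}\bigr)$ (or its case-(2) analogue). Among the $\tfrac{d\varphi(t)}{t}$ irreducible factors $g_t(\theta^u x^t)$ of $f(x^n)$ over $\F_{q^p}$ with this $t$, only the $\tfrac{d'\varphi(t)}{t}$ ones with $A\mid u$ are $\F_q$-rational; the remaining $\tfrac{d\varphi(t)}{t}-\tfrac{d'\varphi(t)}{t}$ are not, and they group into Frobenius orbits of length $p$, producing $\tfrac1p\bigl(\tfrac{d\varphi(t)}{t}-\tfrac{d'\varphi(t)}{t}\bigr)$ irreducible factors of degree $pkt$ over $\F_q$. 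Your first family records only the rational factors for such $t$, and your second family excludes such $t$ altogether, so these grouped factors are never counted. A degree check already exposes the problem: your list has total degree $kd'\sum_{\rad(t)\mid(q-1)}\varphi(t)+kd\sum_{\rad(t)\nmid(q-1)}\varphi(t)$, which is strictly less than $nk=kd\sum_{t\mid m}\varphi(t)$ since $d'<d$. Concretely, take $q=2$, $f(x)=x-1$, $n=7$ (so $p=3$, $d=7$, $d'=1$, $m=1$): your two sums give the single factor $x-1$, whereas $x^7-1=(x-1)\Phi_7(x)$ has three irreducible factors over $\F_2$; the two missing cubics belong to $t=1$, for which $\rad(t)\mid(q-1)$.

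The paper's own proof avoids this by not splitting on $t$ at all: for every $t\mid m$ it takes the $\tfrac{d\varphi(t)}{t}$ factors over $\F_{q^p}$ given by Corollary~\ref{conta1}, separates those already in $\F_q[x]$, and divides the remainder by $p$. Your orbit-size argument via Lemma~\ref{l_tu} (every non-rational factor has $l_{t,u}=p$ because $l_{t,u}$ divides $s_n=p$) is correct and is exactly what justifies that division by $p$; you simply need to apply it to the non-rational factors for \emph{all} $t\mid m$, not only those with $\rad(t)\nmid(q-1)$. Be warned, however, that your closing ``algebraic rearrangement'' cannot land on the displayed closed form even after this repair: since rational factors occur only when $\rad(t)\mid(q-1)$, the corrected count is $\tfrac{p-1}{p}\gcd(n,q-1)\prod_{p'\mid m,\ p'\mid q-1}\bigl(1+\nu_{p'}(m)\tfrac{p'-1}{p'}\bigr)+\tfrac{\gcd(n,q^p-1)}{p}\prod_{p'\mid m}\bigl(1+\nu_{p'}(m)\tfrac{p'-1}{p'}\bigr)$, with the first product genuinely restricted: for $q=2$, $f(x)=x-1$, $n=49$ the true count is $5$, while the unrestricted expression in the statement evaluates to $39/7$. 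So the restriction is not a cosmetic choice but is forced by the same phenomenon your proposal overlooks.
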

\begin{proof}
From  Theorem \ref{conta1}, the number of irreducible factors (over $\mathbb F_{q^{p}}[x]$) of degree $kt$ for each $t|m$  is 
$\dfrac{\varphi(t)}{t}\,\gcd(n,\,q^{p}-1).$
 Therefore the total number of irreducible factors is:
\begin{align*}
&\sum_{t|m}\frac{\varphi(t)}{t}\gcd(n,\,q-1)+\sum_{t|m}\frac{1}{p}\left( \frac{\varphi(t)}{t}\gcd(n,\,q^{p}-1)-\frac{\varphi(t)}{t}\gcd(n,\,q-1)\right)\\
&=\sum_{t|m}\frac{p-1}{p}\gcd(n,\,q-1)\frac{\varphi(t)}{t}+\sum_{t|m}\frac{1}{p} \gcd(n,\,q^{p}-1)\frac{\varphi(t)}{t}\\
&= \frac{p-1}{p}\gcd(n,\,q-1)\displaystyle\prod_{{p'|m}\atop{p'\text{prime}}}\left(1+\nu_{p'}(m)\dfrac{p'-1}{p'}\right)+ \frac{\gcd(n,\,q^{p}-1)}{p}\displaystyle\prod_{{p'|m}\atop{p'\text{prime}}}\left(1+\nu_{p'}(m)\dfrac{p'-1}{p}\right)
\end{align*}
\end{proof}

\begin{center}{\bf Acknowledgments}\end{center}
The first author was partially supported by FAPEMIG (Grant number: APQ-02973-17). 
The second author was supported by FAPESP 2018/03038-2, Brazil.


\end{document}